\numberwithin{equation}{section}
\crefname{equation}{equation}{equations}
\crefname{figure}{figure}{figures}
\crefname{section}{Section}{Sections}
\crefname{subsection}{Subsection}{Subsections}
\setlist[enumerate]{leftmargin=.5in}
\setlist[itemize]{leftmargin=.5in}
\tikzset{>=latex}
\tikzset{node distance=50pt, auto}
\newtheorem{theorem}{Theorem}[section]
\newtheorem{proposition}[theorem]{Proposition}
\theoremstyle{definition}
\newtheorem{definition}[theorem]{Definition}
\theoremstyle{remark}
\newtheorem{remark}[theorem]{Remark}
\newcommand{\bounds}[2]{\{#1,\ldots,#2\}}
\newcommand\Fa{X}
\newcommand\Fb{Y}
\newcommand\Fc{Z}
\newcommand{\sgr}[1]{\mathrm{gr}\ifthenelse{\isempty{#1}}{}{(#1)}}
\newcommand{\gr}[1]{\mathrm{bgr}\ifthenelse{\isempty{#1}}{}{(#1)}}
\newcommand\K{K}
\newcommand{\eb}{\mathbf e}
\newcommand{\ind}{x}
\newcommand\ma{f}
\newcommand\mb{g}
\renewcommand{\c}{\mathbf{y}}
\renewcommand{\d}{\mathbf{z}}
\newcommand\StdRed{\mathsf{GrRed}}
\newcommand\StdRedSub{\mathsf{BiRedSub}}
\newcommand\StdRedSubAux{\mathsf{BiRedSubAux}}
\newcommand\KerBetti{\mathsf{KerBetti}}
\newcommand\KerBasis{\mathsf{KerBasis}}
\newcommand\KerSchr{\mathsf{KerSchr}}
\newcommand\MinGens{\mathsf{MinGens}}
\newcommand\MinimizePres{\mathsf{MinimizePres}}
\newcommand{\Z}{\mathbb{Z}}		
\newcommand{\R}{\mathbb{R}}		
\newcommand{\g}{\mathbf g}
\newcommand{\nll}{\mathsf{null}}
\newcommand{\nr}{m}
\newcommand{\nc}{n}
\newcommand{\Rc}[0]{B}
\renewcommand{\S}[0]{B'}
\newcommand{\X}[0]{\mathcal F}
\newcommand{\y}{\mathbf{y}}
\newcommand{\z}[0]{\mathbf z}
\newcommand{\lows}{{\sf pivs}}
\newcommand{\firep}{\text{FI-Rep}\xspace}
\DeclareMathOperator{\im}{im}
\DeclareMathOperator{\coker}{coker}
\DeclareMathOperator{\rank}{rank}
\DeclareMathOperator{\id}{Id}
\DeclareMathOperator{\dimf}{hf}
\DeclareMathOperator{\nullityf}{nullf}
\DeclareMathOperator{\rankf}{rankf}
\DeclareMathOperator{\Id}{Id}
\DeclareMathOperator{\nullity}{nullity}
\DeclareMathOperator{\supp}{supp}
\DeclareMathOperator{\pivot}{\rho}
\DeclareMathOperator{\grid}{grid}
\begin{document}

\title{Computing Minimal Presentations and Bigraded Betti Numbers of\\ 2-Parameter Persistent Homology}

\author{Michael Lesnick\thanks{SUNY Albany, Albany, NY, USA; \texttt{mlesnick@albany.edu}} \and Matthew Wright\thanks{St.\ Olaf College, Northfield, MN, USA; \texttt{wright5@stolaf.edu}}}

\maketitle


\begin{abstract}
Motivated by applications to topological data analysis, we give an efficient algorithm for computing a (minimal) presentation of a bigraded $K[x,y]$-module $M$, where $K$ is a field.  The algorithm takes as input a short chain complex of free modules $X\xrightarrow{f} Y \xrightarrow{g} Z$ such that $M\cong \ker{g}/\operatorname{im}{f}$.  It runs in time 
$O(|X|^3+|Y|^3+|Z|^3)$ and requires $O(|X|^2+|Y|^2+|Z|^2)$ memory, where $|\cdot |$ denotes the rank.  Given the presentation computed by our algorithm, the bigraded Betti numbers of $M$ are readily computed.  Our approach is based on a simple matrix reduction algorithm, slight variants of which compute kernels of morphisms between free modules, minimal generating sets, and Gr\"obner bases.  Our algorithm for computing minimal presentations has been implemented in RIVET, a software tool for the visualization and analysis of two-parameter persistent homology.  In experiments on topological data analysis problems, our implementation outperforms the standard computational commutative algebra packages Singular and Macaulay2 by a wide margin.
\end{abstract}

\section{Introduction}

\subsection{Persistence Modules, Minimal Presentations, and Betti Numbers}\label{Sec:Persistence_Modules}

Let $\K$ be a field.  For $d\in \{1,2,\ldots\}$, a \emph{($d$-parameter) persistence module} is defined to be a $\K[\ind_1,\ldots,\ind_d]$-module $M$ equipped with a \emph{$d$-grading}. A $d$-grading of $M$ is a vector space decomposition $M=\oplus_{\z\in \Z^d} M_\z$ such that $\ind^i M_\z\subset M_{\z+\eb_i}$ for all $\z\in \Z^d$ and $i\in \{1,\ldots,d\}$, where $\eb_i$ denotes the $i^{\mathrm{th}}$ standard basis vector.  We refer to 2-parameter persistence modules as \emph{bipersistence modules.} Persistence modules are standard objects of study in commutative algebra \cite{eisenbud2005geometry,miller2005combinatorial}.  

In topological data analysis (TDA) \cite{carlsson2009topology,oudot2015persistence,edelsbrunner2010computational,wasserman2018topological,boissonnat2018geometric}, $d$-parameter persistence modules arise as invariants of data, in the context of \emph{multi-parameter persistent homology}.  To explain, let us define a \emph{($d$-parameter) filtration} to be a collection of simplicial complexes $\{\X_\z\}_{\z \in \Z^d}$ such that $\X_\z\subset \X_{\z+\eb_i}$ for all $\z\in \Z^d$ and $i\in \{1,\ldots,d\}$.   A number of well-known constructions in TDA associate a filtration to data, with the aim of topologically encoding information about the shape of the data.  Applying the $i^{\mathrm{th}}$ homology functor with coefficients in $\K$ to the filtration, one obtains a $d$-parameter persistence module, which serves an algebraic descriptor of the data.  The $d=1$ case has received the most attention, but it is often quite natural to consider $d$-parameter filtrations for $d\geq 2$, and this is currently a very active area of research.  The  $d=2$ case is of particular interest \cite{carlsson2009theory,lesnick2015interactive,miller2020homological,botnan2018algebraic,cochoy2016decomposition}, in part because 2-parameter filtrations arise in the study of point cloud data with noise or non-uniform density \cite{carlsson2009theory, blumberg2020stability,sheehy2012multicover,edelsbrunner2018multi,corbet2021computing}.

The isomorphism type of a finitely generated $1$-parameter persistence module is specified by a collection of pairs $(a,b)$ with $a<b\in \Z\cup\{\infty\}$, called a \emph{barcode}.  When $d\geq 2$, the representation theory of $d$-parameter persistence modules is known to be wild, and there is no simple invariant which completely encodes the isomorphism type of a module \cite{carlsson2009theory}.  Nevertheless, for the purposes of TDA, one can consider incomplete invariants of a persistence module as surrogates for the barcode, and a number of ideas for this have been proposed, e.g., see \cite{carlsson2009theory,cerri2013betti,lesnick2015interactive,vipond2018multiparameter,harrington2017stratifying}.

Although no simple, complete invariant of a $d$-parameter persistence module is available, one can specify the isomorphism type of a finitely generated persistence module via a \emph{minimal presentation}.  Concretely, this is a matrix with field coefficients, with each row and each column labeled by an element of $\Z^d$.  Because minimal presentations are not unique, they cannot be directly used in the way barcodes are used in TDA, e.g., as input to machine learning algorithms or statistical tests.  However, they serve as useful computational intermediates.

The \emph{(multi-graded) Betti numbers} are standard invariants of a persistence module, and play an important role in parts of commutative algebra \cite{miller2005combinatorial,eisenbud2005geometry}.  For a finitely generated $d$-parameter persistence module $M$ and $j\in \{0,\ldots,d\}$, the $j^{\mathrm{th}}$ (multi-graded) Betti number of $M$ at grade $\z$, denoted $\beta_j^M(\z)$, is the number of elements at grade $\z$ in a basis for the $j^{\mathrm{th}}$ module in a free resolution for $M$; see \cref{Sec:Preliminaries}.  The $\beta_j^M(\z)$ thus define a function $\beta_j^M:\Z^d\to \mathbb{N}$.  
When $d=2$, we refer to the $\beta_j^M(\z)$ as \emph{bigraded Betti numbers}.  

For persistence modules arising in TDA, the multi-graded Betti numbers offer interesting partial information about the coarse-scale geometry of the data \cite{carlsson2009theory}.  The bigraded Betti numbers of a bipersistence module are readily visualized as a collection of colored dots in the plane \cite{lesnick2015interactive}.
They also have an another application to TDA: In recent work, the authors have introduced a software tool called RIVET for the interactive visualization of bipersistence modules, designed with needs of TDA in mind \cite{lesnick2015interactive,rivet}.  The tool provides a visualization of the bigraded Betti numbers of a bipersistence module $M$, as well as visualizations of two other invariants $M$, the \emph{Hilbert function} and \emph{fibered barcode}.  The fibered barcode is a collection of barcodes of certain 1-parameter restrictions of the bipersistence module.  A central feature of RIVET is a framework for interactive visualization of the fibered barcode.  RIVET's interactivity makes use of a novel data structure called the \emph{augmented arrangement} of $M$, which is a line arrangement in the plane, together with additional data of a barcode $\mathcal B_f$ at each face $f$ of the arrangement.  The definition of line arrangement is given in terms of $\beta_0^M$ and $\beta_1^M$, and the algorithm for computing it involves the computation of $\beta_0^M$ and $\beta_1^M$ as a subroutine.  Our experience suggests that computing the barcodes $\mathcal B_f$ is best done by first computing a minimal presentation for $M$. 

\subsection{Our Contributions}
Motivated by TDA applications, and in particular by RIVET's data analysis pipeline, this paper considers the problems of computing a (minimal) presentation and the bigraded Betti numbers of a bipersistence module $M$.  We assume that $M$ is given implicitly: We take the input to be a chain complex of free bipersistence modules
\begin{equation}\label{Eq:Short_Chain_Complex}
\Fa\xrightarrow{\ma} \Fb \xrightarrow{\mb} \Fc
\end{equation}
 with $M\cong \ker{\mb}/\im{\ma}$.  
We provide algorithms for both problems requiring $O(|X|^3+|Y|^3+|Z|^3)$ time and $O(|X|^2+|Y|^2+|Z|^2)$ memory, where for a free module $F$, $|F|$ denotes the rank of $F$, i.e., the size of any basis for $F$.  Here and throughout, when stating complexity bounds, we assume for simplicity that an elementary arithmetic operation in the field $K$ requires $O(1)$ time, and that storing an element of $K$ requires $O(1)$ memory.  Virtually all TDA computations are done with $K$ a finite field, where these assumptions hold.

\paragraph{Chain Complexes of Free Modules from Filtrations}
We next briefly explain how the short chain complexes of free modules \cref{Eq:Short_Chain_Complex} arise in TDA.  Given a $d$-parameter filtration $\X$, one has an associated chain complex of $d$-parameter persistence modules 
\[\cdots \xrightarrow{\partial_{i+1}} C_i(\X)\xrightarrow{\partial_{i}} C_{i-1}(\X)\xrightarrow{\partial_{i-1}}\cdots \xrightarrow{\partial_{1}} C_{0}(\X)\to 0,\]
where $C_i(\X)_\z:=C_i(\X_\z;\K)$ is the usual simplicial chain vector space with coefficients in $\K$, and the internal maps in $C_i(\X)$ are inclusions.  Let $H_i(\X)$ denote the $i^{\mathrm{th}}$ homology module of this chain complex.

Often in TDA, $\X$ is defined in a way that ensures each $C_i(\X)$ is free.  In this case we say $\X$ is \emph{1-critical}; otherwise, we say $\X$ is \emph{multi-critical}.  For example, the function-Rips bifiltration \cite{carlsson2009theory,blumberg2020stability} and the rhomboid bifiltration \cite{corbet2021computing,edelsbrunner2018multi} are 1-critical, while the degree bifiltrations \cite{lesnick2015interactive,blumberg2020stability} and the Rips trifiltration for time-varying data defined in \cite{kim2020spatiotemporal} are multi-critical.   If $\X$ is multi-critical, a simple construction proposed by Chacholski, Scolamiero, and Vaccarino \cite{chacholski2017combinatorial} takes as input the short chain complex \[C_{i+1}(\X)\xrightarrow{\partial_{i+1}} C_i(\X)\xrightarrow{\partial_{i}}C_{i-1}(\X)\] and yields a chain complex of free $d$-parameter persistence modules \[X\xrightarrow{f} Y\xrightarrow{g} Z\] with $H_i(\X)\cong \ker g/\im f$.  In the 2-parameter case, this has been implemented in RIVET by Roy Zhao.

Whether the filtration is 1-critical or multi-critical, the cost of computing the filtration and its associated chain complex is an important consideration.  For some choices of filtration, such as the rhomboid bifiltration, devising a practical algorithm for this is non-trivial \cite{edelsbrunner2020simple}, while for other filtration types, such as the density-Rips bifiltrations  considered in the computational experiments of this paper, a naive approach suffices for practical use.  While the efficient computation of specific types of multi-parameter filtrations is an important topic, we do not study this here; we assume that the chain complex of free bipersistence modules (\ref{Eq:Short_Chain_Complex}) has already been computed and is available as input to our algorithms.

\paragraph{Classical Approaches to Presentation and Betti Number Computation}
Questions of computational efficiency aside, the problems of computing a minimal presentation and bigraded Betti numbers of a bipersistence module can also be solved by standard Gr\"{o}bner basis techniques, which work in much greater generality.  These techniques can in fact compute resolutions and Betti numbers of graded $\K[\ind_1,\ldots,\ind_d]$ modules, for any $d$.  Typically, such approaches rely on Schreyer's algorithm, a standard algorithm for computing kernels of homomorphisms of free modules  \cite{schreyer1980berechnung,cox1998using}, or on refinements of this algorithm \cite{erocal2016refined,la1998strategies}. 
Several variants of the Gr\"{o}bner basis approach to computing (minimal) resolutions and Betti numbers are implemented in popular computational algebra software packages such as Macaulay2, Magma, Singular, and CoCoA \cite{eisenbud2001computations,greuel2012singular,MR1484478,CoCoA}.  

But to the best of our knowledge, no prior work has focused on the particular problem of computing presentations and Betti numbers of bipersistence modules.  In this work, we introduce simpler and more efficient algorithms for this special case.

\paragraph{The Bigraded Reduction}
The core computational engine underlying the algorithms of this paper (and our main technical contribution) is a simple matrix reduction algorithm, which we call the \emph{bigraded reduction}.  This is similar to the standard matrix algorithm for computing persistent homology via left-to-right column additions \cite{zomorodian2005computing}.  The key difference is that instead of reducing an entire matrix in one pass, the bigraded reduction proceeds by reducing various submatrices of increasing size. 

Slight variants of the bigraded reduction solve three basic problems involving free bipersistence modules.  We'll call the three variants we consider V1--V3.  To explain these, let $\gamma$ be a morphism of free bipersistence modules represented by an $m\times n$ matrix; see \cref{Sec:Preliminaries} for the definitions.  As explained in \cite{chacholski2017combinatorial}, $\ker \gamma$ is free.

\begin{itemize}
\item V1  computes a basis of $\ker \gamma$ in $O(\nc(\nr+\nc)\min(\nr,\nc))$ time and $O(\nr\nc+\nc^2)$ memory.  
\item V2  computes a minimal set of generators of $\im \gamma$ in time $O(\nr\nc\cdot \min(\nr,\nc)+\nc^2)$  time and $O(\nr\nc)$ memory.  The memory bound is asymptotically tight, as it matches the worst-case size of the input.
\item V3  computes a minimal Gr\" obner basis $G$ of $\im \gamma$ in $O(\nr\nc\cdot \min(\nr,\nc)+\nc^2)$ time and $O(\nr\nc\cdot \min(\nr,\nc))$ memory.
By extending a construction shown to us by Alex Tchernev, we show that the total number of terms in $G$ is $\Theta(\nr\nc\cdot \min(\nr,\nc))$ in the worst case (\cref{Prop:Tchernev}), so the memory bound is again asymptotically tight. 
\end{itemize}

\paragraph{Comparison with Classical Approaches}
Instead of using V1, one could compute a set of generators for $\ker \gamma$ via Schreyer's algorithm, as outlined, e.g., in \cite[Chapter 5, Proposition 3.8]{cox1998using} or \cite[Chapter 15.5]{eisenbud1995commutative}, and then minimize this set of generators.  In fact, one can view V1 as a simplified and optimized variant of this approach.  That said, compared to the standard version of Schreyer's algorithm, V1 is simpler and more efficient.  In particular, we will see in \cref{Sec:Comparison} that V1 requires asymptotically less memory than a naive implementation of Schreyer's algorithm.  The key observation is that while V1 effectively computes a Gr\"obner basis of $\im \gamma$, it never stores the full Gr\"obner basis in memory; in contrast, Schreyer's algorithm does store a full Gr\" obner basis of $\im \gamma$.

Along similar lines, it is well known that one can minimize a set of generators for a submodule of a free $\K[\ind_1,\ldots,\ind_d]$-module via a Gr\" obner basis computation \cite[Section 2]{bayer1992can}.  Our minimization algorithm V2 is similar in spirit, but as above, it uses asymptotically less memory because it never stores a full Gr\" obner basis.

\paragraph{Computing Minimal Presentations and Betti Numbers via Bigraded Reduction}
The main algorithms of this paper do not explicitly use V3, i.e., they do not explicitly compute Gr\" obner bases of images, but they rely in an essential way on V1 and V2: Given a chain complex of free bipersistence modules  \[\Fa\xrightarrow{\ma} \Fb \xrightarrow{\mb} \Fc,\] computing a (non-minimal) presentation of $\ker{\mb}/\im{\ma}$ amounts to computing a basis for $\ker{\mb}$ using V1, and then using standard linear algebra to express a set of generators of $\im{\ma}$ in terms of this basis.  (Indeed, for the latter step, standard linear algebra is sufficient because the basis for $\ker{\mb}$ that we compute is also a Gr\"obner basis; see \cref{Rem:Basis_Is_a_Grobner_Basis}.)  To obtain a minimal presentation, we first use V2 to minimize the set of generators for 
$\im{\ma}$; this yields what we call a \emph{semi-minimal} presentation.  From this, we then compute a minimal presentation, using a variant of a standard matrix reduction procedure for minimizing resolutions.  The details are given in \cref{Sec:Computing_A_Presentation}. 

The problems of computing a presentation and computing the bigraded Betti numbers are closely related.  As we explain in \cref{Sec:Betti_from_Pres}, once we have computed a semi-minimal presentation of $\ker{\mb}/\im{\ma}$ as outlined above, we can obtain the Betti numbers with little additional work; fully minimizing the presentation is not necessary.  Our approach to computing a minimal presentation extends readily to the computation of a minimal resolution, via one additional kernel computation.  But to compute the Betti numbers from a presentation, we in fact do not need to compute a full resolution.

Problems arising in TDA typically are very large but very sparse; as such, we formulate our algorithms using sparse linear algebra.  It turns out that the sparse linear algebra techniques commonly employed in computations of 1-parameter persistent homology \cite{zomorodian2005computing,bauer2017phat} extend readily to the setting of this paper.

The first version of this paper also described a second approach to computing the bigraded Betti
numbers, which avoids explicit computation of a presentation, and instead relies on the well-known
Koszul homology formulae for the Betti numbers. The approach based on presentation computation
is more efficient, far simpler, and closer to standard approaches. Thus, we have chosen to omit the
Koszul homology algorithm from this version of the paper.

\paragraph{Implementation and Experiments}
RIVET implements our algorithm for computing (minimal) presentations and uses this to compute bigraded Betti numbers.  
Computational experiments, reported in \cref{Sec:Experiments}, indicate that on typical TDA input, our approach performs well enough for practical use (see also \cite{vipond2021multiparameter,keller2018persistent,wright2020topological,schiff2020characterizing} for applications to real world data), and far better than the built-in functions of the standard computational commutative algebra software packages Macaulay2 and Singular.  However, it should be noted that Macaulay2 and Singular were not designed with these kinds of problems in mind, and have not been optimized for them.  On the other hand, RIVET's implementation was done primarily with applications to data visualization in mind, and hence is also not fully optimized for our computational experiments.   Thus, our experiments speak primarily to what is possible in practice with the implementations considered, rather than to the essential algorithmic virtue of the different approaches.

Our implementation can currently handle chain complexes arising in TDA with tens of millions of generators on a desktop computer with 64GB RAM.  State-of-the art codes for computing 1-parameter persistent homology, such as Ripser \cite{bauer2017ripser,bauer2019ripser} or Eirene \cite{henselman2016matroid,hylton2017performance}, can handle far larger inputs, thanks to a number of key optimizations such as clearing \cite{chen2011persistent,bauer2017phat}, the use of cohomology \cite{de2011dualities}, and implicit representation of chain complexes \cite{bauer2019ripser}.  We expect that such optimizations can be adapted to our setting, allowing our approach to handle substantially larger input.  

\paragraph{Kerber and Rolle's Improvements}
In fact, one year after we posted the first version of this paper to arXiv, Kerber and Rolle released a paper introducing an improved version of our algorithm for computing minimal presentations \cite{kerber2021fast}.  Their version of the algorithm remains quite similar to ours at a high level (and in particular, uses the bigraded reduction in the same way), but introduces several optimizations.   
Via a large set of computational experiments, Kerber and Rolle demonstrate that in practice, these optimizations lead to major improvements in speed and memory consumption.  That said, these  optimizations do not take advantage of cohomology or implicit representations of chain complexes, and we believe that there remains considerable room for further improvements, 
at least for some important classes of bifiltrations.   The work of Kerber and Rolle will not be discussed elsewhere in the present paper, except for \cref{Rem:Kerber}, which already appeared in the first arXiv version.

\subsection{Other Related Work}\label{Sec:Related_Work}
Carlsson, Zomorodian, and Singh were the first to consider computational aspects of multi-parameter persistence modules in the TDA setting \cite{carlsson2010computing}.  Their work considers the computation of Gr\"{o}bner bases of images and kernels of morphisms of free $d$-parameter persistence modules, using the classical Buchberger's algorithm and Schreyer's algorithm.  The work of Chacholski, Scolamiero, and Vaccarino \cite{chacholski2017combinatorial}, mentioned above, also explores the computational aspects of multi-parameter persistent homology, with a focus on the case where the chain modules are not necessarily free. 

Aiming in part to address some issues with the earlier work \cite{carlsson2010computing}, the Ph.D. thesis of Jacek Skryzalin \cite{skryzalin2016numeric} revisits the problem of computing Gr\"{o}bner bases of the kernels and images of homomorphisms of free $d$-parameter persistence modules.  Skryzalin outlines an algorithm for this \cite[Algorithm 5]{skryzalin2016numeric}.  The algorithm is inspired by the well-known F4 and F5 algorithms for computing Gr\"{o}bner bases via sparse linear algebra \cite{faugere1999new,faugere2002new}.  In the case of bipersistence modules, Skryzalin's algorithm reduces to an algorithm similar to our bigraded reduction, with the same asymptotic   complexity, though his exposition is rather different and some details are different.  (Our work and Skryzalin's were done independently.)  Skryzalin does not consider the computation of presentations or Betti numbers.

Papers by Allili et al.\ and Scaramuccia et al.\ \cite{scaramuccia2018computing,allili2017reducing} have  introduced algorithms which use discrete Morse theory to simplify a multi-filtration without altering its topological structure.  Fugacci and Kerber have recently developed an algorithm that efficiently minimizes a sparse chain complex of free persistence modules \cite{fugacci2018chunk}.  This can be viewed as a purely algebraic analogue of the algorithms of \cite{scaramuccia2018computing,allili2017reducing}.
The algorithm of Fugacci and Kerber specializes to an algorithm for minimizing a semi-minimal presentation, and this is relevant to our work; see \cref{Rem:Kerber}.  

Another line of related work concerns the computation of metrics between $d$-parameter persistence modules \cite{bjerkevik2018computing,biasotti2011new,bjerkevik2017computational,dey2018computing,kerber2018exact,kerber2020efficient,bjerkevik2021asymptotic}.  This is one potentially significant application of minimal presentation computation in TDA.

\subsection{Outline}
\cref{Sec:Preliminaries} introduces basic definitions and standard results used in the paper.  
In particular, \cref{Sec:MatRed} introduces the matrix reduction used in standard persistent homology computations; this matrix reduction serves as a primitive upon which the main algorithms of this paper build.  \cref{Sec:RankComputationAtAllIndices} presents the bigraded reduction and its application to computing the kernel of a morphism of free bipersistence modules.  \cref{Sec:Comparison} compares our algorithm for computing kernels to the classical approach via Schreyer's algorithm.  \cref{Sec:Semi-Minimal_Pres} applies the ideas of \cref{Sec:RankComputationAtAllIndices} to the problem of computing a semi-minimal presentation.  \cref{Sec:Minimizing_Pres} gives an algorithm for minimizing a semi-minimal presentation. \cref{Sec:Betti_from_Pres} gives an algorithm for directly computing the bigraded Betti numbers from a semi-minimal presentation, without minimizing. 
 \cref{Sec:Experiments} reports the results of our computational experiments.   \cref{Sec:Discussion} concludes the paper with a brief discussion of directions for future work.

\section{Preliminaries}\label{Sec:Preliminaries}

\subsection{Notation and Terminology}
In what follows, let $M$ be a $d$-parameter persistence module, as defined in \cref{Sec:Persistence_Modules}.
We regard $\Z^d$ as a partially ordered set by taking \[\y=(y_1,y_2,\ldots,y_n)\leq (z_1,z_2,\ldots,z_n)=\z\] if and only if $y_i\leq z_i$ for all $i$.  For $\y\leq \z\in \Z^d$, the action of the monomial
 \[\ind^{\z-\y}:=\ind_1^{z_1-y_1}\ind_2^{z_2-y_2}\cdots\ind_2^{z_n-y_n}\] on $M$ restricts to a linear map $M_{\c,\d}:M_\c\to M_\d$.  

A \emph{morphism} $\gamma:M\to N$ of $d$-parameter persistence modules is a module homomorphism such that $\gamma(M_\d)\subset N_\d$ for all $\d\in \Z^d$.  Let $\gamma_\d$ denote the restriction of $\gamma$ to $M_\d$.  With this definition of morphism, the $d$-parameter persistence modules form an abelian category.   

We say a non-zero element $m\in M$ is \emph{homogeneous} if $m\in  M_\d$ for some $\z\in \Z^d$.  In this case, we write $\sgr{m}=\z$.  A homogeneous submodule of $M$ is one generated  by a set of homogeneous elements.  For example, given a morphism of persistence modules $\gamma:M\to N$, $\ker(\gamma)$ and $\im(\gamma)$ are homogeneous submodules of $M$ and $N$, respectively.  Homogeneous submodules are themselves persistence modules (i.e., they inherit a $d$-grading from the ambient module), as are quotients  of persistence modules by homogeneous submodules.  Henceforth, all submodules we consider will be understood to be homogeneous.

Define \[\dimf(M):\Z^d\to \mathbb{N},\] the \emph{Hilbert function of $M$}, by 
$\dimf(M)(\z)=\dim M_\z $.  Given a morphism $\gamma:M\to N$ of persistence modules, let $\rankf \gamma:=\dimf(\im \gamma)$ and $\nullityf \gamma:=\dimf(\ker \gamma)$.  We call $\rankf \gamma$ and $\nullityf \gamma$ the \emph{pointwise rank} and \emph{pointwise nullity} of $\gamma$, respectively.

\subsection{Free Persistence Modules}\label{Sec:Free_Modules}
For $d\geq 1$ and $\g\in \Z^d$, let $Q^\g$ denote the $d$-parameter persistence module given by 
\begin{align*}
Q^\g_\d &=
\begin{cases}
\K &\text{if } \g\leq \d, \\
0 &\text{otherwise,}
\end{cases}
& Q^\g_{\c,\d}=
\begin{cases}
\id_\K &{\text{if } \g\leq \c},\\
0 &{\text{otherwise.}}
\end{cases}
\end{align*}
We say a $d$-parameter persistence module $F$ is \emph{free} if there exists a multiset $\mathcal G$ of elements in $\Z^d$ such that $F\cong \oplus_{\g\in \mathcal G}\, Q^\g$.
 
Many of the standard ideas of linear algebra adapt in a straightforward way to free persistence modules.  For example, we define a \emph{basis} for a free persistence module $F$ to be  a minimal homogeneous set of generators.   Though as in linear algebra, bases are usually not unique, the number of elements at each grade in a basis for $F$ is an isomorphism invariant.  In fact, this invariant is given by the $0^{\mathrm{th}}$ bigraded Betti numbers of $F$; see \cref{Def:Min_Pres} below.

Suppose we are given an ordered basis $B$ for a 
finitely generated free persistence module $F$.  We denote the $i^{\mathrm{th}}$ element of $B$ as $B_i$.  For $\z\in \Z^d$, we can represent $v\in F_\z$ with respect to $B$ as a vector $[v]^B\in\K^{|B|}$; we take $[v]^B$ to be the unique vector such that $[v]_i^B=0$ if $\sgr{B_i}\not\leq\z$ and 
\begin{equation}\label{Eq:Linear_Combination}
v=\sum_{i: \sgr{B_i}\leq\z} [v]^B_i \ind^{\z-\sgr{B_i}}B_i.
\end{equation}
Thus, $[v]^B$ records the field coefficients in the linear combination of $B$ giving $v$.   

Along similar lines, for $B'$ an ordered basis of a free persistence module $F'$, we represent a morphism $\gamma:F\to F'$ via a matrix $[\gamma]^{\Rc,\S}$ with coefficients in the field $\K$, with each row and column labeled by an element of $\Z^d$, as follows:
\begin{itemize}
\item The $j^{\mathrm{th}}$ column of $[\gamma]^{\Rc,\S}$ is $[\gamma(B_j)]^{\S}$.
\item The label of the $j^{\mathrm{th}}$ column is $\sgr{B_j}$,
\item The label $i^{\mathrm{th}}$ row is $\sgr{B'_i}$.
\end{itemize}
It is easy to see that $[\gamma]^{\Rc,\S}$ determines $\gamma$ up to natural isomorphism.\footnote{Given morphisms of persistence modules $\gamma:M\to M'$ and $\kappa:N\to N'$, a natural isomorphism $f:\gamma\to \kappa $ is a pair of isomorphisms $f:M\to N$, $f':M'\to N'$ such that the following diagram commutes: \[
\begin{tikzcd}[ampersand replacement=\&,row sep=2.5ex]  
M\arrow["\gamma"]{r}\arrow["f",swap]{d}  \&M'\arrow["\,f'"]{d} \\
N\arrow["\kappa",swap]{r} \&N'.  
\end{tikzcd}
\]}
Where no confusion is likely, we sometimes write $[\gamma]^{\Rc,\S}$ simply as $[\gamma]$.

\subsection{Resolutions, Presentations, and Bigraded Betti Numbers}\label{Sec:BettiNumDef}
An exact sequence of free persistence modules
\[F_\bullet:=\quad \cdots \xrightarrow{\partial_3}  F_2 \xrightarrow{\partial_2} F_1 \xrightarrow{\partial_{1}} F_{0}\] 
is called a \emph{resolution} of $M$ if $\coker(\partial^1)\cong M$. 

For a persistence module $N$, let $IN\subset N$ denote the submodule generated by the images of all linear maps $N_{\y,\z}$ with $\y<\z\in \Z^d$.  We say the resolution $F_\bullet$ is \emph{minimal} if $\im \partial_i\subset IF_{i-1}$ for each $i$.  It can be shown that if $M$ is finitely generated, then a minimal resolution $F_\bullet$ of $M$ exists.  It is unique up to isomorphism, and any resolution can be obtained (up to isomorphism) from $F_\bullet$ by summing with resolutions of the form 
\[\cdots 0\to 0 \to  G \xrightarrow{\id_G} G \to 0\to 0\to \cdots \to 0\] 
where $G$ is free, and the two copies of $G$ are allowed to appear at any two consecutive indices.   For a fuller discussion of minimal resolutions, see \cite[Chapters 19 and 20]{eisenbud1995commutative} or \cite[Chapter 1]{peeva2011graded}.

A \emph{presentation} of a persistence module is a morphism $\partial:F\to F'$ of free persistence modules with $\coker(\partial)\cong M$.  Thus, a presentation for $M$ is the data of the last morphism in a free resolution for $M$.  A presentation is said to be \emph{minimal} if it extends to a minimal resolution.  Thus, minimal presentations are unique up to isomorphism and any minimal presentation can be obtained (up to isomorphism) by summing with maps of the form \[G\xrightarrow{\Id_G} G\qquad \textup{or}\qquad G\to 0,\] where $G$ is free.  

It follows from \cref{Sec:Free_Modules} that we can represent the presentation $\partial:F\to F'$ with respect to bases $B$ and $B'$ for $F$ and $F'$ via the labelled matrix $[\partial]^{B,B'}$.  By slight abuse of terminology, we also call this labeled matrix a presentation of $M$.  

\begin{definition}[Betti Numbers]\label{Def:Min_Pres}
Let $F_\bullet$ be a minimal resolution of a finitely generated $d$-parameter persistence module $M$.  For $i\geq 0$, define the function $\beta_i^M:\Z^d\to \mathbb N$ by 
\[\beta_i^M:=\dimf(F_i/I F_i).\]
For $\z\in \Z^d$, we call $\beta_i^M(\z)$ the \emph{$i^{\mathrm{th}}$ Betti number of $M$ at grade $\z$}.
\end{definition}
It is easily checked that $\beta_i^M(\z)$ is the number of elements at grade $\z$ in any basis for $F_i$.

\begin{remark}\label{Rem:Hilbert_Syzygy}
Hilbert's syzygy theorem tells us that in a minimal resolution $F_\bullet$ of a finitely generated $d$-parameter persistence module $M$, $F_{i}=0$ for $i>d$.  Thus, $\beta_i^M$ is only of interest for $i\leq d$.
\end{remark}

The following formula relating the Hilbert function to the bigraded Betti numbers follows from Hilbert's Syzygy theorem by an easy inductive argument; see \cite[Theorem 16.2]{peeva2011graded} for a proof of the analogous result in the case of $\Z$-graded $K[t_1,\ldots,t_d]$-modules. 

\begin{proposition}\label{Prop:Relation_Between_Betti_And_Dim}
For $M$ a finitely generated $d$-parameter persistence module and $\d\in \Z^d$,
\[\dim M_\d =\sum_{i=0}^d (-1)^i \sum_{\c\leq \d}\beta_i^M(\c).\]
\end{proposition}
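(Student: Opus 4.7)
The plan is to derive the formula from the existence of a minimal free resolution of $M$ together with the alternating-sum (Euler characteristic) identity for a bounded exact sequence of finite-dimensional vector spaces.

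First, I would invoke Hilbert's syzygy theorem (as recorded in Remark 2.1) to fix a minimal free resolution
\[
0 \to F^d \xrightarrow{\partial^d} F^{d-1} \to \cdots \to F^1 \xrightarrow{\partial^1} F^0 \to M \to 0
\]
of length at most $d$. Since $M$ is finitely generated and the resolution is minimal, each $F^i$ is also finitely generated, so by \cref{Def:Min_Pres} the function $\beta_i^M$ has finite support. I would then evaluate the resolution at a fixed grade $\d \in \Z^d$; because evaluation $N \mapsto N_\d$ is exact on persistence modules (as it is the composition of the forgetful functor to $\Z^d$-graded modules followed by extraction of a graded component), this yields an exact sequence of finite-dimensional $\K$-vector spaces
\[
0 \to F^d_\d \to F^{d-1}_\d \to \cdots \to F^0_\d \to M_\d \to 0.
\]

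Applying the standard alternating-sum identity to this exact sequence gives
\[
\dim M_\d \;=\; \sum_{i=0}^d (-1)^i \dim F^i_\d.
\]
The remaining step is to translate $\dim F^i_\d$ into Betti numbers. Since $F^i$ is free, Section 2.2 lets me write $F^i \cong \bigoplus_{\g \in \Z^d} (Q^\g)^{\beta_i^M(\g)}$, where the exponent is justified by the observation after \cref{Def:Min_Pres} that $\beta_i^M(\g)$ equals the number of basis elements of $F^i$ in grade $\g$. Using the definition of $Q^\g$, we have $\dim Q^\g_\d = 1$ if $\g \leq \d$ and $0$ otherwise, so
\[
\dim F^i_\d \;=\; \sum_{\g \leq \d} \beta_i^M(\g).
\]
Substituting into the alternating sum yields the claimed identity.

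The argument is essentially bookkeeping once the minimal resolution is in hand, so I do not anticipate a serious obstacle. The one point that requires a moment of care is the exactness of the stalk-at-$\d$ functor on persistence modules; this is immediate from the definition of a $d$-grading as a direct-sum decomposition, but it is the hinge that turns the algebraic resolution into a numerical identity, and it should be noted explicitly rather than invoked tacitly.
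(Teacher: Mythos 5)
Your argument is correct and is exactly the standard proof that the paper is gesturing at: the paper gives no details, merely noting that the formula ``follows from Hilbert's Syzygy theorem by an easy inductive argument'' and citing \cite[Theorem 16.2]{peeva2011graded} for the $\Z$-graded analogue, and your Euler-characteristic computation on the pointwise-evaluated minimal resolution (together with the observation $\dim Q^\g_\d = 1$ iff $\g\leq\d$) is that argument spelled out. Your remark about the exactness of evaluation at a grade $\d$ is the right thing to flag, and the rest is indeed bookkeeping.
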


\subsection{Gr\" obner bases}\label{Sec:Grobner}
We now define Gr\"obner bases of (homogeneous) submodules of free multi-parameter persistence modules.  Since we work in the multigraded setting, we can give a simpler definition than is otherwise possible, because there is no need to introduce monomial orderings; it suffices to consider ordered bases of free modules.

First, for nonzero $v\in \K^{\nr}$, we define the \emph{pivot} of $v$ to be 
$\max\, \{i \mid v_i\ne 0\}$.  If $v=0$, we define the pivot of $v$ to be $\nll$.  

Now let $F$ be a free finitely generated $d$-parameter persistence module, and fix an ordered basis $B$ of $F$.  For a homogeneous element $m\in F_\d$ such that the pivot of $[m]^B$ is $i$, define the \emph{leading term} of $m$ to be \[\mathrm{LT}(m):=[m]^B_i x^{\z-\sgr{B_i}}B_i\in F_\d.\]
Thus, $[\mathrm{LT}(m)]^B$ is obtained from $[m]^B$ by setting every entry of $[m]^B$ to 0, except for the pivot.

Given a submodule $M\subset F$, define a persistence module $\mathrm{L}(M)\subset F$ by \[\mathrm{L}(M)=\langle \mathrm{LT}(m)\mid m\in M \textup{ homogeneous} \rangle.\]  A homogeneous set $G$ of generators for $M$ is called a \emph{Gr\"obner basis} if \[\mathrm{L}(M)=\langle \mathrm{LT}(g)\mid g\in G\rangle.\]  Note that whether $G$ is a Gr\"obner basis depends on the choice of order on $B$. 

\begin{remark}\label{Rem:Two_Kinds_of_Bases}
For $M\subset F$ a free submodule, a basis for $M$ as defined in \cref{Sec:Free_Modules} needn't be a Gr\"obner basis.  And conversely, a minimal Gr\"obner basis for $M$ needn't be a basis.  Given the potential for confusion here, we will be careful to never forget the modifier ``Gr\"obner'' when referring to Gr\"obner bases.
\end{remark}

\subsection{Graded and Bigraded Matrices}
We define a \emph{graded matrix} to be a matrix with entries in $\K$, with each column labeled by an element of $\Z$, such that the column labels appear in increasing order.  Similarly, we define a \emph{bigraded matrix} to be a matrix with entries in $\K$, with each column labeled by an element of $\Z^2$, such that the column labels appear in colexicographical order.  If $D$ is a (bi)graded matrix, we denote the label of the $j^\mathrm{th}$ column by $\sgr{D}_j$.  
  
  Given a bigraded matrix $D$ and $\z\in \Z^2$, we let $D_{\d}$ (respectively, $D_{\leq \d}$) denote the graded submatrix of $D$ consisting of the columns $j$ of $D$ with $\sgr{D}_j=\z$ (respectively, $\sgr{D}_j\leq\z$);  
here $\leq$ denotes the partial order on $\Z^2$, not the colexicographical order.  For $D$ a graded matrix and $\z\in \Z$, we define $D_\z$ and $D_{\leq \z}$ analogously.

\subsection{Free Implicit Representations: The Input to Our Algorithms}\label{sec:FIRep}
As noted earlier, the algorithms of this paper take as input a bipersistence module $M$ given implicitly as a chain complex of free bipersistence modules
\[\Fa\xrightarrow{\ma} \Fb \xrightarrow{\mb} \Fc\]
with $M\cong \ker \mb/\im \ma$.
We now specify in more detail how we represent this input.  It is clear from the discussion in \cref{Sec:Free_Modules} that with respect to bases $B^X$, $B^Y$, and $B^Z$ for $\Fa$, $\Fb$, and $\Fc$, we can represent the short chain complex above as a pair of matrices $[\ma]$ and $[\mb]$, with the rows and columns of both matrices labelled by elements of $\Z^2$.  In fact, to encode $M$ up to isomorphism, it is enough to keep only the column labels of this pair of matrices: The row labels of $[\mb]$ are not needed, and the row labels of $[\ma]$ are the same as the column labels of $[\mb]$.

In the case that $B^X$ and $B^Y$ are both colexicographically ordered with respect to grade, the column-labeled matrices $[\ma]$ and $[\mb]$ are in fact bigraded matrices.  We then call the pair of bigraded matrices $([\ma],[\mb])$ a \emph{free implicit representation (\firep) of $M$}.
Our algorithms take as input an \firep of $M$.

In the degenerate case that $\Fc$ is trivial, so that $[\mb]$ is an empty matrix, the \firep is simply a presentation for $M$.

\subsection{Column-Sparse Representation of Matrices}
For the complexity analysis of the algorithms of this paper, we assume that matrices are stored in a format allowing for
\begin{itemize}
\item constant time access to the non-zero element of largest index in each column, 
\item $O(\nr)$-time column addition, where $\nr$ is the number of rows in the matrix.
\end{itemize}
Moreover, for practical TDA computations, we need to work with sparse matrix data structures.  To meet these requirements, it suffices to store matrices in a column sparse format, storing the non-zero entries of each column of the matrix as an ordered list.  This is standard in persistence computation \cite{zomorodian2005computing}.  

\begin{remark}
In the context of computing persistent homology, Bauer, Kerber, Reininghaus, and Wagner have studied the practical efficiency of a number of sparse data structures for matrix columns, including linked lists, dynamically allocated arrays, lazy heaps, and (for $\Z/2\Z$ coefficients) bit trees \cite{bauer2017phat}.  They have found that lazy heaps, which perform well when adding a column with very few non-zero entries to a column with many entries, are very effective in practice on TDA problems.  Subsequent implementations of persistent homology computation by these authors use lazy heaps \cite{bauer2014distributed,bauer2017ripser}. 
Following this work, our implementations use lazy heaps as well.  
However, we note that in the worst case, column addition using lazy heaps takes time $O(\nr \log \nr)$, whereas column addition using a list takes time $O(\nr)$.  
\end{remark} 

 \subsection{The Graded Reduction and Kernel Computation in the 1-D Case}\label{Sec:MatRed}
The standard algorithm for computing persistent homology barcodes, introduced by Zomorodian and Carlsson in \cite{zomorodian2005computing}, is a simple matrix reduction algorithm similar to Gaussian elimination.  It is based on column additions.  In this paper, we will call this algorithm the \emph{graded reduction}, or $\StdRed$.  A variant of $\StdRed$ can also be used to compute a basis for the kernel of a morphism of free 1-D persistence modules; this sort of kernel computation is commonly used in TDA to obtain a set of generators for persistent homology.  The graded reduction serves as a starting point for our approach to computing bigraded Betti numbers.  

We now describe the graded reduction and its use in kernel computation.  We will not need to consider how this algorithm is used to compute barcodes, though this is simple; for an explanation, see \cite{zomorodian2005computing} or \cite{edelsbrunner2010computational}.
 
We denote the $j^{\mathrm{th}}$ column of a matrix $R$ by $R(*,j)$, and let $\pivot^R_j$ denote the pivot of this column, as defined in \cref{Sec:Grobner}.  We say $R$ is \emph{reduced} if $\pivot^R_j\ne \pivot^R_{k}$ whenever $j\ne k$ are the indices of non-zero columns in $R$.  Note that if $R$ is reduced, then $\rank R$ is simply the number of non-zero columns of $R$.

$\StdRed$ takes any matrix $D$ and performs left-to-right column additions to transform $D$ into a reduced matrix $R$.  
An outline of the algorithm is given below as \cref{alg:MatRedStd}. 

\begin{algorithm}[h]
  \caption{The Graded Reduction $\StdRed$ (Outline)}\label{alg:MatRedStd}
  \footnotesize
  \begin{algorithmic}[1]
    \Require An $\nr\times \nc$ matrix $D$
    \Ensure A reduced $\nr\times \nc$ matrix $R$ obtained from $D$ by left-to-right column additions
    \State $R\leftarrow D$
    \For{$j=1$ to $\nc$}
        \While{$\exists\ k<j$ such that $\nll\ne\pivot^R_j = \pivot^R_k$}
          \State add $-\frac{R(\pivot^R_j,j)}{R(\pivot^R_j,k)}R(*,k)$ to $R(*,j)$  
        \EndWhile
     \EndFor
  \end{algorithmic}
\end{algorithm}

\begin{remark} In implementing line 1 of Algorithm 1, we do not copy the input matrix $D$ into a new matrix $R$; rather, $R$ is a reference to $D$. We introduce this reference purely as an expository convenience, to distinguish between the input matrix $D$ and the matrices obtained from $D$ by column additions. We use references similarly in the algorithms that follow. 

\end{remark}

To complete the specification of the algorithm $\StdRed$, it remains to explain how we check the conditional of line 3 in \cref{alg:MatRedStd} and how we find $k$ when the conditional does hold. 
This can be done in constant time, provided we maintain a 1-D array $\lows$ of length $\nr$, where $\lows[i]$ records which column reduced so far, if any, has $i$ as its pivot.  We call $\lows$ the \emph{pivot array}.  Our convention is that $\lows$ is indexed starting from 1, not 0.  The full algorithm using the pivot array is given below as \cref{alg:MatRedStdWithPivotArray}.

Later algorithms in this paper use pivot arrays in a similar fashion; we will sometimes suppress the details.

\begin{algorithm}[h]
  \caption{$\StdRed$ (In Detail)}\label{alg:MatRedStdWithPivotArray}
  \footnotesize
  \begin{algorithmic}[1]
    \Require An $\nr\times \nc$ matrix $D$
    \Ensure A reduced $\nr\times \nc$ matrix $R$ obtained from $D$ by left-to-right column additions
        \State $R\leftarrow D$
    \State Initialize an array $\lows$ of size $\nc$, with each entry set to $\nll$
    \For{$j=1$ to $\nc$}
        \While{$R(*,j)\ne 0$ \textbf{and} $\lows[\pivot^R_j]\ne \nll$}
          \State $k \leftarrow \lows[\pivot^R_j]$
          \State add $-\frac{R(\pivot^R_j,j)}{R(\pivot^R_j,k)}R(*,k)$ to $R(*,j)$.  
        \EndWhile
        \If{$R(*,j)\ne 0$}
             \State $\lows[\pivot^R_j]\leftarrow j$
        \EndIf
     \EndFor
  \end{algorithmic}
\end{algorithm}

It is easy to check that for $D$ an $\nr\times \nc$ matrix, $\StdRed(D)$ requires $O(\nr\nc\cdot \min(\nr,\nc))$ elementary operations in the worst case.  

\begin{remark}\label{Rmk:StdRed_and_Rank_Nullity}
Let $\gamma:F\to F'$ be a morphism of free 1-D persistence modules and let $B$, $B'$ be ordered bases for $F$, $F'$  with $B$ in grade-increasing order.  Applying $\StdRed$ to the graded matrix $[\gamma]^{B,B'}$ yields a reduced graded matrix $R$ from which we can read the pointwise ranks and nullities of $\gamma$: For any $\z\in \Z$, $\rank \gamma_\z$ is the number of nonzero columns of $R_{\leq \z}$, and $\nullity \gamma_\z$ is the number of zero columns in $R_{\leq \z}$.  Similarly, $\xi_0^{\ker \gamma}$ can be read off of $R$: $\xi_0^{\ker \gamma}(\z)$ is the number of zero columns in $R_\z$.
\end{remark}

\paragraph{Kernel Computation}
Let $\gamma$, $B$, and $B'$ be as in \cref{Rmk:StdRed_and_Rank_Nullity}.  \cref{alg:SinglyGradedKernel} gives an extension of $\StdRed$ which computes a basis $B_{\ker \gamma}$ for the free module $\ker \gamma$, given $[\gamma]^{B,B'}$ as input.  Each element $b\in B_{\ker \gamma}$ is represented by the pair $(v,\sgr{b})$ where $v$ is the vector in $\K^{|\S|}$ representing $b$, as specified in \cref{Sec:Free_Modules}.  In this algorithm, we maintain an \emph{auxiliary matrix} $V$, initially the $|\Rc|\times |\Rc|$ identity matrix $\Id_{|B|\times |B|}$; every time we do a column operation to $[\gamma]$, we do the same column operation to $V$.  When a column $j$ of $[\gamma]^{B,B'}$ is reduced to $0$, the homogeneous element of $F$ at grade $\sgr{\Rc_j}=\sgr{[\gamma]^{B,B'}}_j$ represented by the $j^{\mathrm{th}}$ column of $V$ is added to $B_{\ker \gamma}$.

It is an easy exercise in linear algebra to check that \cref{alg:SinglyGradedKernel} correctly computes a basis for $\ker \gamma$.

\begin{algorithm}[h]
  \caption{Kernel of a Morphism of Free 1-D Persistence Modules}\label{alg:SinglyGradedKernel}
  \footnotesize
  \begin{algorithmic}[1]
    \Require An $\nr\times \nc$ graded matrix $[\gamma]$ representing a morphism $\gamma$ of free 1-D persistence modules
    \Ensure A basis $B_{\ker \gamma}$ for $\ker \gamma$, represented as a list of pairs $(v,\z)$, where $v\in \K^{\nc}$ and $\z\in \Z$.
    \State $R\leftarrow [\gamma]$
    \State $V\leftarrow \Id_{\nc\times \nc}$
    \State $B_{\ker \gamma}\leftarrow\{\}$
    \For{$j=1$ to $\nc$}
        \While{$\exists\ k<j$ such that $\nll\ne\pivot^R_j = \pivot^R_k$}
          \State add $-\frac{R(\pivot^R_j,j)}{R(\pivot^R_j,k)}R(*,k)$ to $R(*,j)$
          \State add $-\frac{R(\pivot^R_j,j)}{R(\pivot^R_j,k)} V(*,k)$ to $V(*,j)$
        \EndWhile
        \If{$R(*,j)=0$}
             \State append $(V(*,j), \sgr{R}_j)$ to $B_{\ker \gamma}$
        \EndIf
     \EndFor
  \end{algorithmic}
\end{algorithm}

\begin{remark}[Clearing]\label{Rem:Clearing}
Suppose we have graded matrices $[\ma]$ and $[\mb]$ representing the morphisms in a short chain complex of free 1-parameter persistence modules
\[\Fa\xrightarrow{\ma} \Fb \xrightarrow{\mb} \Fc\]
with respect to some choice of bases for $X$, $Y$, and $Z$.  Chen and Kerber \cite{chen2011persistent} have observed that the reduction of $[\ma]$ can be used to expedite the reduction of $[\mb]$.  The key is to note that if $R^f$ and $R^g$ are reduced matrices obtained from $[\ma]$ and $[\mb]$, respectively, by left-to-right column additions, and $R^f(*,j)$ is non-zero with pivot $i$, then $R^g(*,i)=0$.  Hence, for each non-zero column of $R^f$, we can immediately zero out one column of $[\mb]$ before running $\StdRed$ to compute $R^g$.  This shortcut is called the \emph{twist optimization}, or alternatively, \emph{clearing}.  It has been observed that for typical persistent homology computations, this optimization can yield drastic speedups \cite{chen2011persistent,bauer2017phat,bauer2017ripser}.  

Ulrich Bauer has observed that the clearing optimization in fact extends to the computation of a basis for $\ker \mb$.  The idea is simple: If $R^f(*,j)$ is non-zero with pivot $i$ and the $i^{\mathrm{th}}$ column of $[\mb]$ has label $z$, we set $R^g(*,i)=0$ and add the vector in $\Fb_z$ represented by $R^f(*,j)$ to $B_{\ker \mb}$.  Bauer's software Ripser exploits this idea to compute the barcodes of Vietoris--Rips filtrations in a very memory-efficient way \cite{bauer2017ripser}.  
\end{remark}

\section{Kernel Computation in the Bigraded Case}\label{Sec:RankComputationAtAllIndices}
We next present our bigraded reduction algorithm for computing a basis for the kernel of a morphism $\gamma:F\to F'$ of finitely generated free bipersistence modules; as mentioned in the introduction, $\ker \gamma$ is free.  We take the input to be a bigraded matrix $[\gamma]^{B,B'}$ representing $\gamma$ with respect to a choice of ordered bases $B$ and $B'$ for $F$ and $F'$.  Note that the condition that $[\gamma]^{B,B'}$ is bigraded implies that $B$ is colexicographically ordered with respect to grade.

First we will consider the slightly simpler problem of computing $\beta_0^{\ker \gamma}$.  Our algorithm for this extends to an algorithm for computing the kernel itself, in essentially the same way that $\StdRed$ (\cref{alg:MatRedStd,alg:MatRedStdWithPivotArray}) extends to an algorithm for computing a kernel of a morphism of 1-parameter persistence modules (\cref{alg:SinglyGradedKernel}).  We will represent $\beta_0^{\ker \gamma}$ as a list of pairs $(\z,q)\in\Z^2\times \mathbb{N}$ such that $\beta_0^{\ker \gamma}(\z)> 0$, and $q=\beta_0^{\ker \gamma}(\z)$.

\paragraph{Reduction of Bigraded Submatrices}

The bigraded reduction depends on the algorithm $\StdRedSub$ (\cref{alg:BiRedSub}) given below.  $\StdRedSub$ is a variant of  $\StdRed$ which, given a bigraded matrix $D$ and $\z=(z_1,z_2)\in \Z^2$ with $D_{\leq (z_1,z_2-1)}$ already reduced, puts $D_{\leq\z}$ in reduced form.
\begin{algorithm}[h]
  \caption{$\StdRedSub$}\label{alg:BiRedSub}
  \footnotesize
  \begin{algorithmic}[1]
    \Require An $\nr\times \nc$ bigraded matrix $D$; $\z \in \Z^2$ such that $D_{\leq (z_1,z_2-1)}$ is reduced
    \Ensure An $\nr\times \nc$ bigraded matrix $R$ with $R_{\leq\z}=\StdRed(D_{\leq \z})$ and $R(*,j)=D(*,j)$ for $\sgr{D}_j\not\leq \z$.
    \State $R\leftarrow D$
    \State $\mathsf{Indices}\leftarrow\left\{j\in \{1,\ldots,\nc\} \mid \sgr{R}_j=(y,z_2)\textup{ for some }y\leq z_1\right\}$
    \ForAll{$j\in \mathsf{Indices}$, in increasing order,}
        \While{$\exists\ k<j$ such that $\sgr{R}_k\leq \z$ and $\nll\ne\pivot^R_j = \pivot^R_k$}
          \State add $-\frac{R(\pivot^R_j,j)}{R(\pivot^R_j,k)}R(*,k)$ to $R(*,j)$. 
        \EndWhile
     \EndFor
  \end{algorithmic}
\end{algorithm}

As with $\StdRed$, we need to specify how we check the conditional and find $k$ in the \textsf{while} loop for $\StdRedSub$ (line 4).  The way we do this depends on the context in which we call $\StdRedSub$, and will be explained below.

\paragraph{Grids}
Given $Y\subset\Z^2$, define $\grid(Y)\subset \Z^2$ by \[\grid(Y):=\{(z_1,z_2) \mid (z_1,y)\in Y\textup{ and }(x,z_2)\in Y\textup{ for some }x,y\}.\]
For a free module $F$, let $\grid(F):=\grid(\supp(\beta_0^{F}))$, where $\supp$ denotes the support of a function.

\paragraph{Computation of $\beta_0^{\ker \gamma}$ via Bigraded Reduction}
The algorithm $\KerBetti$ (\cref{alg:Bigraded_Reduction}) below computes $\beta_0^{\ker \gamma}$; see \Cref{fig:alg_xy}.  Note that the algorithm makes simultaneous use of both the lexicographical and colexicographical orders on $\Z^2$: It assumes that $\gamma$ is represented as a bigraded matrix, which means that the column labels are in colexicographical order; and it computes $\beta_0^{\ker \gamma}(\z)$ for each $\z\in \grid(F)$, in lexicographical order on $\grid(F)$.  This interplay between the lexicographical and colexicographical orders is crucial to the success of our approach. 

\begin{algorithm}[h]
  \caption{$\KerBetti$: Computes $\beta_0$ of the Kernel of a Morphism of Free Bipersistence Modules}\label{alg:Bigraded_Reduction}
  \footnotesize
  \begin{algorithmic}[1]
    \Require A bigraded matrix $[\gamma]$, representing a morphism $\gamma:F\to F'$ of free bipersistence modules
    \Ensure $\beta_0^{\ker \gamma}$, represented as a list of pairs $(\z,q)\in \Z^2 \times \mathbb{N}$ with $q>0$.
    \State $R\leftarrow [\gamma]$
     \State $\beta_0^{\ker \gamma}\leftarrow \{\}$
    \ForAll{$\z\in \grid(F)$ in lexicographical order}
        \State $R\leftarrow\StdRedSub(R,\z)$  
      \If{$n>0$ columns of $R$ have been reduced to $0$ in the step above}
      \State append $(\z,n)$ to $\beta_0^{\ker \gamma}$.
      \EndIf
    \EndFor
  \end{algorithmic}
\end{algorithm}

\begin{figure}[ht]
  \begin{center}
%
%
\begin{tikzpicture}[scale=0.5,every node/.style={font=\footnotesize}]
  \fill[pattern=thickstripes_pos, pattern color=blue!30!white] (0,0) rectangle (3,3);
  \fill[pattern=thickstripes_neg, pattern color=green!70!white, opacity=0.5] (0,0) rectangle (4,2);
  
  \draw[preaction={clip,postaction={fill=white, draw=orange, line width=4pt}}] (3,2) rectangle +(1,1);
  
  \draw (0,6) -- (0,0) -- (6,0);
  \foreach \i in {1,...,5} {
    \draw (\i, 0) -- (\i, 5.5);
    \draw (0, \i) -- (5.5, \i);
  }
  
  \node[below] at (0.5,0) {$1$};
  \node[below] at (1.5,0) {$2$};
  \node[below] at (2.5,0) {$3$};
  \node[below] at (5.5,0) {$x$};
  \node[left] at (0,0.5) {$1$};
  \node[left] at (0,1.5) {$2$};
  \node[left] at (0,5.5) {$y$};
  \draw[<-] (3.5,-0.1) -- (3.5,-0.6) node[below] {$z_1 = 4$};
  \draw[<-] (-0.1,2.5) -- (-0.6,2.5) node[left] {$z_2 = 3$};
  \node at (6,2.5) {$\cdots$};
  \node at (6,6) {$\iddots$};
  \node at (2.5,6) {$\vdots$};
\end{tikzpicture}
\hspace{36pt}
%
%
\begin{tikzpicture}[yscale=0.65,xscale=0.4,every node/.style={font=\footnotesize}]
  \fill[pattern=thickstripes_pos, pattern color=blue!30!white] (0,0.3) rectangle (3,3.8);
  \fill[pattern=thickstripes_pos, pattern color=blue!30!white] (6,0.3) rectangle (9,3.8);
  \fill[pattern=thickstripes_pos, pattern color=blue!30!white] (12,0.3) rectangle (15,3.8);
  \fill[pattern=thickstripes_neg, pattern color=green!70!white, opacity=0.5] (0,0.3) rectangle (4,3.8);
  \fill[pattern=thickstripes_neg, pattern color=green!70!white, opacity=0.5] (6,0.3) rectangle (10,3.8);
  
  \draw[preaction={clip,postaction={fill=white, draw=orange, line width=4pt}}] (15,0.3) rectangle (16,3.8);
  
  \draw (0,0.3) -- (20,0.3);
  \draw (0,3.8) -- (20,3.8);
  \foreach \i in {1,2,3,4,5,7,8,9,10,11,12,13,14,15,16,17,19}
    \draw (\i,0.3) -- (\i,3.8);
  \foreach \i in {0,6,12,18}
    \draw[thick] (\i,0) -- (\i,4);
  
  \draw[<-,black] (15.5,3.9) -- (15.5,4.7);
  \node at (16.5,5){$\d = (4,3)$};
  \node[below] at (3,0) {$y=1$};
  \node[below] at (9,0) {$y=2$};
  \node[below] at (15,0) {$y=3$};
  \node at (5.5,2) {\scriptsize{$...$}};
  \node at (11.5,2) {\scriptsize{$...$}};
  \node at (17.5,2) {\scriptsize{$...$}};
  \node at (20,2) {$\cdots$};
  
  \draw[decoration={brace,raise=3pt},decorate,red] (12,4) -- (16,4);
  \draw[->, red] (12,5) node[left] {columns to be reduced} to (13.2,5) to [out=0,in=90] (14,4.4);
\end{tikzpicture}
  \end{center}
    \caption{The $\Z^2$-grades through which \cref{alg:Bigraded_Reduction} iterates are shown as squares on the left, and a schematic representation of the bigraded matrix $R$, with $R_{\leq\d}$ highlighted, is shown on the right.  When we begin the reduction of $R_{\leq\d}$ in \cref{alg:Bigraded_Reduction}, line 3, $R_{\leq(z_1,z_2-1)}$ (grades shaded green) is already in reduced form.  However, if $z_2\geq 2$, then $R_{\leq(z_1-1,z_2)}$ (grades shaded purple) is not necessarily reduced, even though it was reduced at an earlier step.  To reduce $R_{\leq\d}$, we only need to reduce columns in $R_{\leq\d}$ with $y$-grade $z_2$ (red bracket).  This is done by calling $\StdRedSub(R,\d)$.} 

\label{fig:alg_xy}
\end{figure}
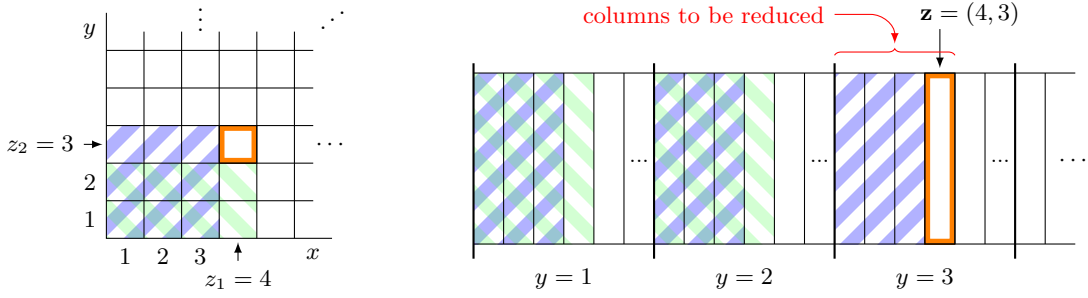

\paragraph{Pivot Arrays for $\KerBetti$}
Our specification of $\StdRedSub$ (\cref{alg:BiRedSub}) above omitted some context-dependent details about the use of a pivot array to implement the \textsf{while} loop of the algorithm. 
We now specify how we handle those details in the context of the algorithm~$\KerBetti$. 

Let $\nr$ be the number of rows of $[\gamma]$.  At the beginning of our call to $\KerBetti$, we initialize a 1-D array $\lows$ of size $\nr$ (indexed from 1, not from 0), with each entry set to $\nll$.  We then implement line 4 of $\StdRedSub$ as follows.  
Let $\ell:=\pivot^R_j$.
\begin{itemize}
  \item If $\ell=\nll$, $\lows[\ell] = \nll$, or $\lows[\ell]\geq j$, then the column index $k$ of line 4 does not exist. If $\ell \ne \nll$ but either $\lows[\ell] > j$ or $\lows[\ell] = \nll$, we set $\lows[\ell] \leftarrow j$. 
  \item Otherwise, $k$ does exist; we take $k=\lows[\ell]$.
\end{itemize}

\begin{proposition}
In the context of $\KerBetti$, the implementation of $\StdRedSub$ described just above works correctly.
\end{proposition}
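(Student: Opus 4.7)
The plan is to verify that the pivot-array-based implementation of line 4 of $\StdRedSub$ correctly identifies, at each iteration of its inner \textsf{while} loop, whether a suitable reducing column $k < j$ with $\sgr{R}_k \leq \z$ and $\pivot^R_k = \pivot^R_j$ exists, returning the smallest such $k$ when it does. I would establish this by maintaining the following invariant on $\lows$ throughout $\KerBetti$: at the start of processing a column $j$ in a call $\StdRedSub(R, \z)$, for each non-null pivot value $\ell$, either (a) $\lows[\ell]$ equals the smallest column index $k < j$ with $\pivot^R_k = \ell$ and $\sgr{R}_k \leq \z$, or (b) $\lows[\ell] = \nll$ or $\lows[\ell] \geq j$ and no such $k$ exists. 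Given the invariant, the bookkeeping rules described immediately before the proposition exactly implement the desired semantics of line 4 of $\StdRedSub$, so correctness is immediate.

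I would prove the invariant by induction over the processing order: primary induction on $\z \in \grid(F)$ in lex order, secondary induction on $j$ in colex order within $\StdRedSub(R, \z)$, and finally on iterations of the inner \textsf{while} loop. Maintenance within a single call is essentially a reparameterization of the standard analysis of persistence matrix reduction (the 1-D analogue of which underlies \cref{alg:SinglyGradedKernel}): whenever we see a non-null pivot $\ell$ for the current column $j$ that is either new or earlier than anything previously recorded, we update $\lows[\ell] \leftarrow j$; whenever $\lows[\ell]$ already records a strictly earlier column with pivot $\ell$ and grade $\leq \z$, we reduce against it.

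The main obstacle is preservation of the invariant across consecutive calls to $\StdRedSub$, because the set of ``relevant'' columns (those with $\sgr{R}_k \leq \z$) changes in a non-monotone way as $\z$ advances in lex order. The concern is a stale entry, namely some $\lows[\ell] = k$ satisfying $\sgr{R}_k \leq \z'$ for some prior $\z'$ but $\sgr{R}_k \not\leq \z$, with $k < j$ for some $j$ in the current call; such an entry would trick the algorithm into using an invalid reducer. The key lemma ruling this out exploits the mismatch between the colex order on columns of $R$ and the lex order on grades in $\grid(F)$. Writing $\z = (z_1, z_2)$, the most recent prior update-grade $\z' = (z_1', z_2')$, and $\sgr{R}_k = (k_1, k_2)$: the prior call forces $k_1 \leq z_1'$ and $k_2 = z_2'$, and $\z \geq \z'$ in lex gives $z_1 \geq z_1'$, hence $k_1 \leq z_1$. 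So staleness ($\sgr{R}_k \not\leq \z$) forces $k_2 > z_2$; but every column $j$ in the current call has second coordinate $z_2$, so in colex order $k > j$, whence the check $\lows[\ell] \geq j$ correctly rejects $k$. A dual analysis handles the case $\lows[\ell] = k > j$ with $\sgr{R}_k \leq \z$ (live but not yet re-processed in the current call), and the fact that the prior call left $R_{\leq \z'}$ reduced with unique pivots ensures that no valid reducer of smaller index is overlooked.
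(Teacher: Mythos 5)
Your proof is correct and, at its core, follows the same inductive structure the paper uses (iterating over $\z \in \grid(F)$ in lexicographical order, with an inner induction over columns in colexicographical/index order). However, your proof is substantially more detailed: the paper's proof is essentially a two-sentence "easy to check" induction that does not articulate the actual danger point, whereas you identify and resolve the real content of the argument. Specifically, you isolate the staleness lemma --- if $\lows[\ell]=k$ was last set during the call at grade $\z'=(z_1',z_2')$, then $\sgr{R}_k=(k_1,k_2)$ satisfies $k_1\le z_1'\le z_1$ and $k_2=z_2'$, so if $\sgr{R}_k\not\le\z$ then necessarily $k_2>z_2$, forcing $k>j$ in colex order and making the guard $\lows[\ell]\ge j$ reject it. This lex-versus-colex interplay is exactly what makes the bigraded reduction sound, and the paper leaves it implicit. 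Your argument is therefore a welcome expansion of the paper's proof rather than a different route. Two small remarks: characterizing $\lows[\ell]$ as the \emph{smallest} qualifying index slightly over-specifies the invariant (after reduction there is at most one nonzero column of $R_{\le\z}$ with pivot $\ell$, so uniqueness is the cleaner formulation), and the "dual" case you flag ($\lows[\ell]=k>j$ with $\sgr{R}_k\le\z$ and $\pivot^R_j=\ell$) in fact cannot arise once the inductive hypothesis is in force --- both $j$ and $k$ would have been processed together at the earlier call $(z_1',z_2)$, where reducedness forbids a repeated pivot --- so that branch is vacuous; neither point affects correctness.
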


\begin{proof}
$\KerBetti$ makes one call to $\StdRedSub$ for each $\z\in \grid(F)$.  We say that $\lows$ \emph{is correctly computed at} $\z$ if at the conclusion of the call to $\StdRedSub$ at index $\z$, we have $\lows[\pivot^R_j]=j$ for each non-zero column $R(*,j)$ with $\sgr{R}_j\leq \z$.

It is easy to check that $\KerBetti$'s call to $\StdRedSub$ at index $\z$ works correctly when $\z$ is of the form $\z=(z_1,1)$, and that $\lows$  is correctly computed at such $\z$.  Now fix $z_1$.  By induction on $z_2$, it is similarly easy to check that for each $\z=(z_1,z_2) \in \grid(F)$, $\KerBetti$'s call to $\StdRedSub$ at index $\z$ works correctly, and that $\lows$  is correctly computed at $\z$.
\end{proof}

\begin{remark}[Computation of Pointwise Rank and Nullity]\label{Rem:Pointwise_Rank_And_Nullity_2_Params}
Given $\beta_0^{\ker \gamma}$, one readily obtains $\rankf \gamma$ and $\nullityf \gamma$ at each point of $\grid F$.  The restriction of $\rankf \gamma$ to $\grid F$ completely determines $\rankf \gamma$, and the same is true for $\nullityf \gamma$.  Henceforth, when we speak of computing $\rankf \gamma$ or $\nullityf \gamma$, we will mean doing so at each point of $\grid F$.
\end{remark}

\begin{remark}[Gr\"obner Basis Computation]\label{Rem:Grobner_Basis_Computation}
A simple modification of the algorithm $\KerBetti$ computes a minimal Gr\"obner basis $G$ of $\im \gamma$ with respect to the ordered basis $B'$.  At the start of the algorithm, $G$ is initialized to the empty set.  The procedure for adding elements to $G$ then works as follows: If, while working at index $\d$ in $\grid(F)$, $\KerBetti$ sets $\lows[\ell] \leftarrow j$, then the element of $F'_\d$ represented by $R(*,j)$ is added to $G$.  We leave it to the reader to check that this indeed correctly computes a Gr\"obner basis for $\im \gamma$.
\end{remark}

\paragraph{Kernel Computation}
The algorithm $\KerBetti$ extends to an algorithm $\KerBasis$ (\cref{alg:BigradedKernel} below) for computing a basis for $\ker \gamma$, in essentially the same way that $\StdRed$ extends to an algorithm for computing the kernel in the 1-D setting: We maintain a square auxiliary matrix $V$, initially the identity.  Every time we do a column operation on $R$, we do the same operation on $V$.  If $R(*,j)$ gets reduced to 0 at index $\z$ in the {\sf{for}} loop, we add $(V(*,j),\z)$ to the basis for the kernel.

To give the pseudocode, we need a variant of $\StdRedSub$ (\cref{alg:BiRedSub}) that also performs column operations on an auxiliary matrix.  We call this variant $\StdRedSubAux$ (\cref{alg:StdRedSubAux}).  The calls to $\StdRedSubAux$ by $\KerBasis$ use a pivot array in exactly the same way as the calls to $\StdRedSub$ by $\KerBetti$ do.

\begin{algorithm}[h]
  \caption{$\StdRedSubAux$}\label{alg:StdRedSubAux}
  \footnotesize
  \begin{algorithmic}[1]
    \Require An $\nr\times \nc$ bigraded matrix $D$; an $\nc\times \nc$ matrix $V$; $\z \in \Z^2$ such that $D_{\leq (z_1-1,z_2)}$ is reduced; 
    \Ensure $R=\StdRedSub(D,\z)$; $\bar V=$ the matrix obtained by performing the same column additions on $V$ that we do on $D$
  \end{algorithmic}
\end{algorithm}

\begin{algorithm}[h]
  \caption{$\KerBasis$: Computes Kernel of a Morphism of Free Bipersistence Modules}\label{alg:BigradedKernel}
  \footnotesize
  \begin{algorithmic}[1]
    \Require An $\nr\times \nc$ bigraded matrix $[\gamma]$, representing a morphism $\gamma:F\to F'$ of free bipersistence modules
    \Ensure A basis $B_{\ker \gamma}$ for $\ker \gamma$, represented as a list of pairs $(v,\z)$, where $v\in \K^\nc$ and $\z\in \Z^2$.
    \State $R\leftarrow [\gamma]$
    \State $V\leftarrow \Id_{\nc\times \nc}$
    \State $B_{\ker \gamma}\leftarrow\{\}$
    \ForAll{$\z\in \grid(F)$ in lexicographical order}
        \State $R,V\leftarrow\StdRedSubAux(R,V,\z)$  
        \ForAll{$j$ such that  $R(*,j)$ was first zeroed out in the step above}
        \State append $(V(*,j),\z)$ to $B_{\ker \gamma}$.
        \EndFor
    \EndFor
  \end{algorithmic}
\end{algorithm}

We now verify the correctness of $\KerBasis$.  The proofs of correctness of $\KerBetti$ (\cref{alg:Bigraded_Reduction}) and the variants described in \cref{Rem:Pointwise_Rank_And_Nullity_2_Params} above are very similar.

\begin{proposition}
$\KerBasis$ ($\cref{alg:BigradedKernel}$) correctly computes a basis for $\ker \gamma$.
\end{proposition}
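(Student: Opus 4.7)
The plan is to verify correctness via a column-operation invariant tying $V$ to $R$ through $\gamma$, combined with a counting argument that reuses the already-proved correctness of $\KerBetti$ and a linear-independence argument drawn from the structure of $V$.

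The key invariant is the following: throughout the run of $\KerBasis$, for every column index $j$ and every $\z \in \Z^2$ with $\sgr{B_j} \leq \z$, if we define $v^\z_j := \sum_i V(i,j)\, t^{\z - \sgr{B_i}} B_i \in F_\z$ (omitting terms with $\sgr{B_i} \not\leq \z$) and analogously define $r^\z_j \in F'_\z$ from $R(*,j)$, then $\gamma(v^\z_j) = r^\z_j$. This holds at initialization since $V = \Id$ and $R = [\gamma]$, and it is preserved by each parallel column addition performed inside $\StdRedSubAux$: $\gamma$ is a module homomorphism, and each addition uses a pair $k < j$ with $\sgr{B_k} \leq \sgr{B_j} \leq \z$, so it corresponds to a legitimate module operation in both $F_\z$ and $F'_\z$. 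It follows immediately that when $R(*,j)$ is first zeroed at index $\z$ in the $\mathsf{for}$ loop, the pair $(V(*,j), \z)$ appended to $B_{\ker \gamma}$ encodes a homogeneous element of grade $\z$ lying in $\ker \gamma$.

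It remains to show that $B_{\ker \gamma}$ is a minimal generating set. Since the entries of $V$ do not influence which column additions the algorithm performs, $\KerBasis$ zeroes columns at exactly the same grades as $\KerBetti$, so the number of elements appended at grade $\z$ equals $\beta_0^{\ker \gamma}(\z)$, the size of any minimal homogeneous generating set of $\ker \gamma$ in grade $\z$. For the linear-independence half, I would exploit the fact that $V$ starts as the identity and is only modified by left-to-right column additions, so each column $V(*,j)$ retains $V(j,j) = 1$ and $V(i,j) = 0$ for all $i > j$; hence the columns of $V$ corresponding to zeroed columns of $R$ have pairwise distinct pivots and are $\K$-linearly independent.

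The main obstacle, requiring the most care, is passing from this $\K$-linear independence of columns of $V$ to linear independence of the corresponding $v^\z_j$ modulo $I \cdot \ker \gamma$, which together with the cardinality match and the multigraded Nakayama lemma will yield that $B_{\ker \gamma}$ is a minimal homogeneous generating set. I would handle this by induction on $\z \in \grid(F)$ in lexicographical order, showing at each stage that the elements appended at grades $\leq \z$ generate $(\ker \gamma) \cap F_{\leq \z}$ as a module, with the inductive step mirroring the correctness argument for $\StdRedSub$ used in the proof of $\KerBetti$.
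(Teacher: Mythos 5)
Your proposed column-operation invariant---that for \emph{every} $\z \geq \sgr{B_j}$ the truncated vectors satisfy $\gamma(v^\z_j) = r^\z_j$---is not actually preserved, and the reason given for preservation contains the flaw. You assert that each addition uses a pair $k < j$ with $\sgr{B_k} \leq \sgr{B_j}$, but $k<j$ only means $\sgr{B_k}$ precedes $\sgr{B_j}$ \emph{colexicographically}: the second coordinate of $\sgr{B_k}$ is $\leq$ that of $\sgr{B_j}$, but its first coordinate can be strictly larger. For example, with $\sgr{B_1}=(2,1)$ and $\sgr{B_2}=(1,2)$, the algorithm may add column $1$ to column $2$ at iteration $(2,2)$, and then for $\z=(1,2)$ the truncated $v^\z_2$ is unchanged while the truncated $r^\z_2$ can change (any row of $R(*,1)$ at grade $\leq(1,1)$ survives truncation), so $\gamma(v^\z_2)\neq r^\z_2$. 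What is true, and what the paper isolates as the key observation, is the following weaker statement: when a column $j$ of $R_{\leq\d}$ is examined at iteration $\d$, every column $k$ that was ever added to it at a \emph{previous} step also satisfies $\sgr{B_k}\leq\d$. Establishing this requires using \emph{both} orderings simultaneously (columns in colex order, iteration in lex order): colex gives $g^k_2 \leq g^j_2 \leq d_2$, while lex iteration gives $g^k_1 \leq y_1 \leq d_1$ where $\y$ is the iteration at which the addition happened. Your proof skips precisely this point, which is the heart of the matter.

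Once this observation is in place, the rest of the argument becomes shorter than what you sketch. The paper concludes directly, by elementary linear algebra, that for each $\z$ the set $B^\z_{\ker\gamma} := \{t^{\z-\sgr{b}}b \mid b\in B_{\ker\gamma},\ \sgr{b}\leq\z\}$ is a basis for $(\ker\gamma)_\z$; this simultaneously gives that $B_{\ker\gamma}$ generates $\ker\gamma$, and, taking $\z$ to be the maximum grid index, linear independence there gives minimality---no appeal to Nakayama or to the separate correctness of $\KerBetti$ is needed. Your route via counting against $\beta_0^{\ker\gamma}$ and then arguing independence of the columns of $V$ (which, note, gives $\K$-linear independence as vectors in $\K^{\nc}$, not yet independence of the corresponding module elements modulo $I\cdot\ker\gamma$) is salvageable but adds machinery, and its final inductive step would anyway need the same key observation to go through.
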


\begin{proof}
To establish the correctness of our algorithm, the key observation is the following: 
\begin{itemize}
\item[$(*)$] For each $\z\in \grid(F)$, when we begin the $\d^{\mathrm{th}}$ iteration of the $\mathsf{for}$ loop in line 4 of \cref{alg:BigradedKernel}, any column that was added to a column of $R_{\leq\d}$ at a previous step of the algorithm was also in $R_{\leq\d}$.
\end{itemize}
For $j$ a column index of $[\gamma]$, let $\g^j=(g^j_1,g^j_2)$ denote $\sgr{[\gamma]}_j$.  To check $(*)$, assume that the $j^\mathrm{th}$ column of $R$ is a column of $R_{\leq\d}$, i.e., that $\g^j\leq \z$.  
Note that for any $\y\in \grid(F)$, if we add $R(*,k)$ to $R(*,j)$ during our call to $\StdRedSubAux(R,V,\y)$, then we must have that $g^k_2\leq g^j_2\leq z_2$, because the column labels are assumed to be colexicographically ordered.  We also have that $g^k_1\leq y_1$.  Moreover, because the algorithm iterates through the indices in lexicographical order, if we call $\StdRedSubAux(R,V,\y)$ before $\StdRedSubAux(R,V,\z)$, then $y_1 \leq z_1$.  Thus $\g^k\leq \z$, which establishes $(*)$.

Given $(*)$, it follows from elementary linear algebra that for each $\z\in \Z^2$, \[B_{\ker f}^\z:=\{\ind^{\z-\sgr{b}}b\mid b\in B_{\ker f},\  \sgr{b}\leq \z\}\] 
is a basis for $\ker f_\z$.  Thus  $B_{\ker f}$ indeed generates $\ker f$.  Moreover, letting $\mathbf{m}$ denote the maximum index in $\grid(F)$, it follows from the linear independence of $B_{\ker f}^{\mathbf{m}}$ that $B_{\ker f}$ is in fact a minimal set of generators for $\ker f$, hence a basis.
\end{proof}

\begin{remark}\label{Rem:Basis_Is_a_Grobner_Basis}
The observations of \cref{Rem:Two_Kinds_of_Bases} notwithstanding, the basis $B_{\ker}$ computed by $\KerBasis$ is clearly a Gr\"obner basis for $\ker \gamma$ with respect to $B$, i.e., distinct elements have distinct pivots.
\end{remark}

\begin{proposition}\label{Prop:Ker_Comp_Complexity}
Given as input an $\nr\times \nc$ matrix representing $\gamma$, $\KerBasis$ runs in 
\[O(\nc(\nr+\nc)\min(\nr,\nc))\] time and requires $O(\nr\nc+\nc^2)$ memory.
\end{proposition}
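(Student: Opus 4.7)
The plan is to separately bound the time spent on column additions, the remaining algorithmic overhead, and the memory footprint. Since $R$ is $\nr \times \nc$ and $V$ is $\nc \times \nc$, each column addition inside $\StdRedSubAux$ modifies one column of $R$ and the corresponding column of $V$; in the column-sparse representation these cost $O(\nr)$ and $O(\nc)$ respectively, so a single addition costs $O(\nr+\nc)$.

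The core of the argument is a bound of $O(\nc \min(\nr,\nc))$ on the total number of column additions performed across all calls to $\StdRedSubAux$. I would adapt the standard analysis of $\StdRed$: every addition targeting column $j$ uses a source $k<j$ with $\pivot^R_k=\pivot^R_j$ at that moment, and immediately afterwards strictly decreases $\pivot^R_j$, while no other operation ever changes $\pivot^R_j$. Hence across the entire run the values taken by $\pivot^R_j$ form a strictly decreasing sequence in $\{1,\dots,\nr\}\cup\{\nll\}$, which alone gives the bound of $\nr$ additions per column. A sharper accounting, which tracks that each pair (target $j$, pre-addition pivot value $p$) corresponds to at most one addition and that at any moment at most $\min(\nr,\nc)$ entries of $\lows$ simultaneously refer to a source $k<j$ usable to reduce $j$, refines this to a $\min(\nr,\nc)$ bound per column, and hence $O(\nc\min(\nr,\nc))$ in total. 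I expect this accounting to be the main obstacle, since, in contrast to $\StdRed$, column $j$ is reprocessed at several grades $\z$ and the pivots of its potential sources can evolve between processings, so the one-pass argument from the one-parameter case does not transfer verbatim; one must argue globally across all $\z$ rather than within a single pass.

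Multiplying these bounds yields a total cost of $O(\nc(\nr+\nc)\min(\nr,\nc))$ for the additions. The remaining overhead is absorbed: $|\grid(F)|\leq \nc^2$, and each column $j$ appears in $\mathsf{Indices}_\z$ for at most $\nc$ values of $\z$ (all sharing the same $z_2$), so the combined cost of iterating through $\grid(F)$ and performing pivot-array lookups is $O(\nc^2)$. For memory, $R$ requires $O(\nr\nc)$ in the worst case, $V$ requires $O(\nc^2)$, and $\lows$ requires $O(\nr)$, giving $O(\nr\nc+\nc^2)$ in total and matching the claim.
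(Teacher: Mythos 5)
Your proposal follows essentially the same decomposition as the paper's own proof: bound the number of column additions, multiply by the per-addition cost $O(\nr+\nc)$ (which accounts for updating $V$ as well as $R$), absorb the bookkeeping overhead into $O(\nc^2)$, and read off the memory from the sizes of $R$, $V$, and $\lows$. The one substantive point of comparison is the bound on the total number of additions. The paper writes only that ``each column addition performed by $\KerBasis$ either decreases the pivot of some column of $R$ or reduces the column to zero,'' and from this asserts the bound $\nc\cdot\min(\nr,\nc)$. As you correctly observe, the pivot-decreasing argument alone gives $\nc\cdot\nr$ (at most $\nr$ strict decreases per column), so when $\nr>\nc$ the $\min$ factor requires a further argument. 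Your proposed refinement -- that at any moment at most $\min(\nr,\nc)$ entries of $\lows$ point to a usable source $k<j$ -- is sound as a snapshot statement, but, as you yourself flag, it does not immediately bound the total across all $\z$, since column $j$ is reprocessed at up to $\nc$ grades and the pivots of its potential sources evolve in between. You are right that this is exactly where the one-pass argument for $\StdRed$ fails to transfer verbatim, and the paper's proof is no more detailed at this point than yours is; it simply asserts the $\min(\nr,\nc)$ factor by analogy with $\StdRed$ without spelling out the global accounting. So the structure of your argument matches the paper's, and the gap you honestly acknowledge in your own write-up is precisely the spot where the paper's proof is also terse. A complete argument would need to show (e.g., via an invariant on how many times any one column can enter $\mathsf{Indices}$ with a conflicting pivot, or a sharper potential-function argument) that the $\nc^2$ bound holds when $\nr>\nc$; neither your proposal nor the paper supplies this in full detail.
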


\begin{proof}
The complexity analysis is similar to that of $\StdRed$, as given in \cite{zomorodian2005computing}. Since each column addition performed by $\KerBasis$ either decreases the pivot of some column of $R$ or reduces the column to zero, $\KerBasis$ performs at most $\nc\cdot \min(\nr,\nc)$ column additions on $R$.  A single column addition on $R$, together with the corresponding operation on the auxiliary matrix $V$, requires $O(\nr+\nc)$ time.
Thus, the column additions performed by $\KerBasis$ require $O(\nc(\nr+\nc)\min(\nr,\nc))$ time in total.

In addition, $\KerBasis$ takes each $j\in \bounds{1}{\nc}$ to be the column index for $\StdRedSub$ at most $\nc$ times.  Thus, the total number of times we change column indices is $\nc^2$.  Each time we change the column index, we do a constant amount of work, beyond that required to perform any column additions.  

Lastly, we perform $O(|\grid(F)|)=O(\nc^2)$ elementary operations simply by iterating through the indices $\z\in \grid(F)$.  The runtime bound now follows.  The bound on memory is clear.
\end{proof}

\begin{remark}\label{Remark:Ker_Betti_Complexity}
The complexity analyses of $\KerBetti$ (\cref{alg:Bigraded_Reduction}) is essentially the same as for $\KerBasis$, except that we do not need to maintain an auxiliary matrix.  Thus, assuming that the input to $\KerBetti$ is an $\nr\times \nc$ matrix, the algorithm runs in time $O(\nr\nc\cdot \min(\nr,\nc)+\nc^2)$, and $O(\nr\nc)$ memory is required.  The variant of $\KerBetti$ for computing a minimal Gr\"obner basis $G$ of $\im(\gamma)$, described in \cref{Rem:Grobner_Basis_Computation}, has the same asymptotic time cost, and requires $O(g+\nr\nc)=O(\nr\nc\cdot \min(\nr,\nc))$ memory, where $g$ is the total number of non-zero terms among all elements of $G$.
\end{remark}

\subsection{Comparison of $\KerBasis$ and Schreyer's Algorithm}\label{Sec:Comparison}
We now compare $\KerBasis$ to the version of Schreyer's algorithm which computes a generating set for $\ker \gamma$, described in  \cite[Chapter 5.3]{cox1998using}.  We denote the latter algorithm as $\KerSchr$.   While  \cite{cox1998using} describes this algorithm in the ungraded setting, \cite{carlsson2010computing} observes that it extends immediately to the multi-graded setting.  We will argue here that $\KerBasis$ is asymptotically more memory efficient than a naive implementation of $\KerSchr$.  

We begin with a brief outline of the algorithm $\KerSchr$, restricting attention to our bigraded setting. The input to $\KerSchr$ is the same as that of  $\KerBasis$, namely an $\nr\times \nc$ bigraded matrix $[\gamma]$ representing $\gamma:F\to F'$ with respect to ordered bases $B$ and $B'$ for $F$ and $F'$, respectively.  
The columns of $[\gamma]$ specify an ordered set of generators $H=\{H_1,\ldots,H_n\}$ for $\im F$.  $\KerSchr$ requires access to the following, which (in a naive implementation, at least) must thus be computed and stored by the algorithm:

\begin{itemize}
\item An ordered Gr\"obner basis $G=\{G_1,\ldots,G_k\}$ for $\im \gamma$ with respect to $B'$.
\item A $\nc \times k$ matrix $T$, whose $j^{\mathrm{th}}$ column expresses $G_j$ as a linear combination of $H$.
\item A $k\times \nc$ matrix $U$, whose $j^{\mathrm{th}}$ column expresses $H_j$ as a linear combination of $G$.
\end{itemize}

Let $F^G$ denote the free bigraded module generated by $G$, and let $\gamma^G:F^G\to \im\gamma$ be the natural map.    Given $G$, \emph{Schreyer's theorem} yields a simple algorithm for computing an ordered set $\{w_1,\ldots,w_l\}$ of homogeneous generators for $\ker \gamma^G$ \cite[Algorithm 1]{erocal2016refined}.   If $G$ is minimal, then it is easily checked that $l<k$.  We may represent $\{w_1,\ldots,w_l\}$ by the columns of a column-labeled $k\times l$ matrix $W$. 
We also regard the matrix $I_{n\times n}-TU$ as a column-labeled matrix, by giving it the same column labels as $[\gamma]$.  Then according to \cite[Chapter 5, Proposition 3.8]{cox1998using}, the columns of the column-labeled matrix
\[\begin{pmatrix}
TW &I_{n\times n}-TU
\end{pmatrix} \]
represent a set of generators for $\ker \gamma$; $\KerSchr$ outputs this matrix.

To rigorously compare $\KerSchr$ and $\KerBasis$, it is helpful to specify some lower-level details of $\KerSchr$ in a way that aligns with those of $\KerBasis$.  We have observed in \cref{Rem:Grobner_Basis_Computation} that a variant of the bigraded reduction efficiently computes a minimal Gr\"obner basis of $G$.  Extending this, the entirety of $\KerSchr$ can in fact be naturally implemented within our bigraded framework, by adapting the ideas of this section.  For example, $T$ can be computed by maintaining an auxiliary matrix as $G$ is computed, as $\KerBasis$ does; and $U$ can be computed simply by recording which column additions are performed.  The details of such an implementation can be specified so that the set of non-zero generators of $\ker \gamma$ computed by $\KerSchr$ is exactly the basis computed by $\KerBasis$.  Yet $\KerSchr$ computes this basis in a needlessly indirect way, and in addition it computes many other trivial generators for $\ker \gamma$ which one knows in advance will be zero.  

Given the parallels between $\KerBasis$ and this implementation of $\KerSchr$, it is easily checked that both the runtime and memory cost $\KerBasis$ are asymptotically bounded above by those of $\KerSchr$.  Intuitively, because $\KerBasis$ is a substantial simplification of $\KerSchr$, one expects $\KerBasis$ to be  substantially more efficient, both in terms of time and memory.  For  memory consumption, we can make this mathematically precise by considering the asymptotic size of the Gr\"obner bases computed by $\KerSchr$.  
As explained above, $\KerSchr$ computes and stores a Gr\"{o}bner basis $G$ for $\im \gamma$.  In contrast, $\KerBasis$ never stores a full Gr\"{o}bner basis for $\im \gamma$.  
In fact, as indicated by \cref{Rem:Grobner_Basis_Computation}, $\KerBasis$ does effectively compute a minimal Gr\"{o}bner basis $G$ for $\im \gamma$ as it computes a basis for $\ker \gamma$, but at most $\nc$ elements of $G$ are stored at a time in the columns of the matrix $R$. 

Thus, together with \cref{Prop:Ker_Comp_Complexity}, the following proposition implies that $\KerBasis$ is asymptotically more memory efficient than the comparable implementation of $\KerSchr$.

\begin{proposition}\label{Prop:Tchernev}
\mbox{}
For $\gamma$ a morphism of free bipersistence modules represented by an $m\times n$ matrix, a minimal Gr\"obner basis for $\im \gamma$ has 
$\Theta(\nc\cdot \min(\nr,\nc))$ elements and a total of $\Theta(\nr\nc\cdot \min(\nr,\nc))$ non-zero terms among all elements, in the worst case.
\end{proposition}

In the case that the field of coefficients $K$ has characteristic 0, \cref{Prop:Tchernev} is due to Alex Tchernev, who showed us the construction used in the proof below.  To obtain the result for fields of positive characteristic, we modify Tchernev's construction.

\begin{proof}[Proof of  \cref{Prop:Tchernev}]
The algorithm of \cref{Rem:Grobner_Basis_Computation} computes a minimal Gr\"obner basis for $\im \gamma$ with $O(\nc\cdot \min(\nr,\nc))$ elements.  With respect to a fixed ordered basis for the codomain of $\gamma$, the number of elements in a minimal Gr\"obner basis $G$ for $\im \gamma$ is easily seen to be independent of the choice of $G$, so any such $G$ has $O(\nc\cdot \min(\nr,\nc))$ elements.  Each element of $G$ has at most $\nr$ terms, so the total number of non-zero terms in $G$ is $O(\nr\nc\cdot \min(\nr,\nc))$.

It remains to show that these bounds are asymptotically tight.  For this, we consider Tchernev's construction, which is the following: For $m,n\geq 1$, let $A$ be the  $m \times n$ matrix given by
\[A_{i,j}=
\begin{cases}
1 &\textup{if } i=m,\\
0 &\textup{if $j=1$ and $i<m$},\\
A_{i,j-1} + A_{i+1,j-1} &\textup{if $j>1$ and $i<m$.}
\end{cases}
\]
 For instance, if $m=5$ and $n=6$, then 
\[
A= \begin{pmatrix}
0 &0 &0 &0 &1 &5 \\
0 &0 &0 &1 &4 &10 \\
0 &0 &1 &3 &6 &10 \\
0 &1 &2 &3 &4 &5 \\
1 &1 &1 &1 &1 &1 
\end{pmatrix}.
\]
As this example illustrates, the non-zero part of $A$ is a truncation of Pascal's triangle.  The matrix $A$ has the following properties:
\begin{itemize}
\item[1.] $A_{i,j}=0$ if and only if $j\leq m-i$.
\item[2.] For all $j\in \{1,\ldots, n-1\}$, $A(*,j+1)-A(*,j)$ is exactly $A(*,j)$ shifted up $1$, with its top entry removed, and a zero in the bottom entry.
\end{itemize}

For any field $K$, let $X$ and $Y$ be free bipersistence modules with coefficients in $K$, where $X$ has a colexicographically ordered basis of size $n$ with grades \[(n,1),(n-1,2),(n-2,3,),\dots,(1,n),\] and
$Y$ has an ordered basis of size $m$ with all grades equal to $(1,1)$.  Fixing such bases, let $\gamma:X\to Y$ be the morphism represented with respect to these bases by the matrix $A$; if $K$ has characteristic $p>0$, then the entries of $A$ are taken mod $p$.

Let $G$ be the minimal Gr\"obner basis of $\im \gamma$ computed via the approach of \cref{Rem:Grobner_Basis_Computation}.  We explicitly describe $G$: The columns of $A$ represent an ordered set \[S_0=(s_1^0,s_2^0,\dots,s_n^0)\] of homogeneous generators for $\im \gamma$.  For $1\leq k\leq \min(m,n)-1$, we inductively define \[S_k=(s^k_1,s^k_2,s^k_3,\ldots,s^k_{n-k})\] by \[s^k_j=x s^{k-1}_{j+1}-y s^{k-1}_j,\]
where $x$ and $y$ are the indeterminates in the polynomial ring $K[x,y]$.
By induction and property 2 above, we see that the column representation of $s^k_j$ in the basis for $Y$ is obtained by shifting the $j^{\mathrm{th}}$ element of $S_0$ up by $k$ rows, with $k$ zeros inserted at the bottom.

 It can be checked that \[G=S_0\cup S_1\cup \cdots \cup S_{\min(m,n)-1}.\]  Thus, the number of elements in $G$ is 
 \begin{align*}
 &\quad n + (n-1) + \cdots + (n-(\min(m,n)-1))  \\
 &= n\cdot \min(m,n)-\min(m,n)(\min(m,n)-1)/2\\
 &=(n-(\min(m,n)-1)/2)\min(m,n)\\
 &=\Theta(\nc\cdot \min(\nr,\nc)).
 \end{align*}

A straightforward calculation shows that if the field $K$ has characteristic 0, then the total number of non-zero terms among all elements of $G$ is  
$\Theta(\nr\nc\cdot \min(\nr,\nc))$.  In brief, this holds because $A$ is a dense matrix.  

In the case that the ground field $K$ has characteristic $p>0$, $G$ is still a minimal Gr\"obner basis for $\im \gamma$, but now $A$ can be asymptotically sparse \cite[Theorem 1 (B)]{singmaster1974notes}, so we no longer have that the total number of terms in $G$ is $\Theta(\nr\nc\cdot \min(\nr,\nc))$.  Thus, to obtain the desired result for such $K$, we modify Tchernev's construction by adding $m$ rows to the top of $A$, in order to make some columns of the Gr\"obner basis dense.  

Specifically, let $v_1$, $v_2$, and $v_3$ denote the column vectors of length $m$ given as follows:
\[
v_1(i)=i \mod 2,\quad v_2(i)=(i+1) \mod 2,\quad v_3(i)=i \mod 3
\]
For example, if $m=5$, then 
\[
v_1= \begin{pmatrix}
1  \\
0  \\
1 \\
0  \\
1  
\end{pmatrix},
\qquad
v_2= \begin{pmatrix}
0  \\
1 \\
0  \\
1  \\
0 
\end{pmatrix},
\qquad
v_3= \begin{pmatrix}
1  \\
0  \\
0 \\
1  \\
0 
\end{pmatrix}.
\]
The key property of these vectors is that any non-zero $K$-linear combination $w$ of these vectors is dense, i.e., at least a constant fraction of entries of $w$ are non-zero.  

Let $B$ be the $m\times n$ matrix whose $j^{\mathrm{th}}$ column is $v_{l}$, where $l\equiv j\mod 3$.  For instance, if $n=6$, then \[B=(v_1\ v_2 \ v_3\ v_1\ v_2\ v_3).\]  Let $X$ be as above, and let $Y'$ be a free module having an ordered basis of size $2m$ with all grades equal to $(1,1)$.  Fix a colexicographically ordered basis for $X$ and a basis for $Y'$, and let $\gamma':X\to Y'$ be the morphism represented with respect to these bases by the block matrix $B\choose A$.  Let $G'$ denote the Gr\"obner basis for $\im \gamma'$ computed via the approach of \cref{Rem:Grobner_Basis_Computation}.

The explicit description of $G$ above carries over to give an explicit description of the subset $H\subset G'$ whose elements have pivot greater than $m$.  In particular, $|H|=|G|=\Theta(\nc\cdot \min(\nr,\nc))$.  (Note that if $m\geq n$, then $H=G'$.)  To finish the proof, we show that total number of non-zero terms among all elements of $G'$ is  
$\Theta(\nr\nc\cdot \min(\nr,\nc))$.  It suffices to show that at least a constant fraction of the columns of $H$ are dense.  

For $j\in \{1,\dots,n\}$, consider the evolution of the $j^{\mathrm{th}}$ column of $B\choose A$ as $H$ is computed via bigraded reduction.  This column contributes $\min(j,m)$ different elements to $H$; we call them \[h_0,\ldots, h_{\min(j,m)-1},\] where the elements are indexed in the order in which they are computed.   Let $b^j_i$ denote the restriction of the column representation of $h_i$ to the first $m$ indices.  Letting $V=(v_1\ v_2\ v_3)$ and  \[
T= \begin{pmatrix}
1 &1 &0   \\
0 &1 &1 \\
1 &0 &1  
\end{pmatrix}, 
\]
one can check that $b^j_i$ is column $l$ of the matrix product $VT^i$, where $l\equiv j\mod 3$.  Thus, $b^j_i$ is dense if and only if column $l$ of $T^i$ is nonzero.  To finish the proof, it therefore suffices to observe that $T$ is not nilpotent, because then at least a constant fraction of the elements of $H$ are dense.  To check that $T$ is not nilpotent, we use the standard fact that the characteristic polynomial $P(\lambda)$ of a nilpotent $n\times n$ matrix is given by  $P(\lambda)=\lambda^n$.  The Cayley--Hamilton theorem then implies that $T$ is nilpotent if and only if $T^3=0$.  But it is easily checked that $T^3\ne 0$ over any field $K$, so $T$ is not nilpotent.
\end{proof}

\section{Computing a (Minimal) Presentation}\label{Sec:Computing_A_Presentation}
As noted in the introduction, our algorithm for computing a minimal presentation from an \firep first computes a (not necessarily minimal) presentation, and then minimizes it.  
In fact, our algorithm first computes a presentation $P$ such that the non-minimal summands are of the form $G\xrightarrow{\Id_G} G$, for $G$ free; we will call such a presentation \emph{semi-minimal.}
In this section, we present the details, and also give an algorithm for computing the Betti numbers directly from a semi-minimal presentation, without minimizing.

\subsection{Computing a Semi-Minimal Presentation}\label{Sec:Semi-Minimal_Pres}
Suppose 
\[\Fa\xrightarrow{\ma} \Fb \xrightarrow{\mb} \Fc\]
is a chain complex with $M\cong \ker \mb/\im \ma$, and we are given an \firep 
$([\ma],[\mb])$, with respect to some choice of bases for $X$, $Y$, and $Z$.
Our algorithm for computing a semi-minimal presentation $P$ for $M$ proceeds in three steps.

\begin{itemize}
\item[1.] From $[\ma]$, find a minimal ordered set of generators $S$ for $\im \ma$ using the algorithm $\MinGens$ (\cref{alg:Minimal_Gens}) below.
\item[2.] From $[\mb]$, compute an ordered basis $B_{\ker}$ for $\ker \mb$, using $\KerBasis$ (\cref{alg:BigradedKernel}).  
\item[3.] Express each element of $S$ in $B_{\ker}$-coordinates, as in \cref{Eq:Linear_Combination}; put the resulting column vectors into a matrix $P$, with column labels the grades of $S$ and row labels the grades of $B_{\ker}$.
\end{itemize}

Since the columns of $[\ma]$ already represent a generating set for $\im \ma$, one can compute $S$ via the bigraded reduction, using a slight variant of \cref{alg:Bigraded_Reduction}.  The algorithm $\MinGens$ does exactly this.
\begin{algorithm}[h]
  \caption{$\MinGens$: Computes a minimal set of generators for the image of a morphism of free bipersistence modules}\label{alg:Minimal_Gens}
  \footnotesize
  \begin{algorithmic}[1]
    \Require An $\nr\times \nc$ bigraded matrix $[\gamma]$, representing a morphism $\gamma:F\to F'$ of free bipersistence modules
    \Ensure A minimal set $S$ of generators of $\im \gamma$, represented as a list of pairs $(v,\z)$, where $v\in \K^\nr$ and $\z\in \Z^2$.
    \State $R\leftarrow [\gamma]$
    \State $S\leftarrow\{\}$
    \ForAll{$\z\in \grid(F)$, in lexicographical order}
        \State $R\leftarrow\StdRedSub(R,\z)$
      \ForAll{columns $R(*,j)$ of $R_\z$ not reduced to $0$ in the last step} 
      \State append $(R(*,j),\z)$ to $S$.
      \EndFor
    \EndFor
  \end{algorithmic}
\end{algorithm}

Note that even if step 1 is omitted, steps 2 and 3 still yield a presentation for $M$, but this presentation may not be semi-minimal.  

Step 3 is just ordinary linear algebra.  Since as noted in \cref{Rem:Basis_Is_a_Grobner_Basis}, $B_{\ker}$ is in fact a Gr\"obner basis  for $\im(\gamma)$ with respect to the given basis for $\Fb$, the computation is especially simple.  It can be carried out efficiently in the column-sparse setting using a pivot array.   We leave the easy details to the reader.

By essentially the same complexity analysis used to prove \cref{Prop:Ker_Comp_Complexity}, we have that if $[\mb]$ and $[\ma]$ have dimensions $a\times b$ and $b\times c$, respectively, then computing a semi-minimal presentation of $M$ using this algorithm requires 
\[O(b(a+b)\min(a,b)+bc\min(b,c)+c^2)=O((a+b+c)^3)\] time and $O(b(a+b+c))$ memory.  The semi-minimal presentation has at most $b$ rows and at most $c$ columns.

\begin{remark}
Our algorithm in fact computes a semi-minimal presentation where the orders of both the row and column labels are compatible with the partial order on $\Z^2$.  In  \cref{Sec:Minimizing_Pres,Sec:Betti_from_Pres} below, it will be useful to assume that the labels are ordered in this way.
\end{remark}

\begin{remark}[Clearing]\label{Rem:Clearing_2D}
We can the leverage work done in Step 1 of the presentation computation to expedite Step 2, using a variant of the clearing optimization described in \cref{Rem:Clearing}.  In its simplest form, this 2-parameter clearing yields an element of $\ker \mb$ for every column added to $S$ whose label is equal to the label of its pivot.  In the 2-parameter setting, a more aggressive variant of clearing is also possible.  

As of this writing, clearing is not yet implemented in our code, and it remains to be seen whether clearing can lead to the same sort of drastic speedups in the 2-parameter setting that it does in the 1-parameter setting.
\end{remark}

\begin{remark} \label{Rmk:Hilbert_Function}
Since $\dimf(M)= \nullityf \mb-\rankf \ma$, steps 1 and 2 of the above algorithm for computing a semi-minimal presentation from an \firep can be modified slightly to also compute the Hilbert function $\dimf(M)$ with negligible additional work; see \cref{Rem:Pointwise_Rank_And_Nullity_2_Params}.
\end{remark}

\subsection{Minimizing a Semi-Minimal Presentation}\label{Sec:Minimizing_Pres}
It is well known in commutative algebra that a non-minimal resolution can be minimized using a variant of Gaussian elimination.  This is explained, e.g., in \cite[pages 127 and 166]{greuel2012singular}.   To minimize a semi-minimal presentation, the simple procedure $\MinimizePres$ (\cref{alg:Minimize_Pres}) below is sufficient.

\begin{algorithm}[h]
  \caption{$\MinimizePres$: Minimizes a semi-minimal presentation}\label{alg:Minimize_Pres}
  \footnotesize
  \begin{algorithmic}[1]
    \Require An $\nr\times \nc$ semi-minimal presentation $P$ for $M$, with the orders of the rows and columns compatible with the partial order on $\Z^2$
    \Ensure A minimal presentation $R$ for $M$
    \State $R\leftarrow P$
    \For{$j=1$ to $\nc$}
    	 \State $p\leftarrow \pivot^R_j$
        \If{the label of column $j$ is equal to the label of row $p$}
         \For{$k=j+1$ to $\nc$}
         \State add $-\frac{R(p,k)}{R(p,j)}R(*,j)$ to $R(*,k)$
         \EndFor
         \State remove column $j$ and row $p$ from $R$.
         \EndIf
      \EndFor
   \State \Return the labeled matrix $R$
  \end{algorithmic}
\end{algorithm}

The \textsf{for} loop of lines 5--6 has the effect of zeroing out all entries of row $p$ except $R(p,j)$.  (Note that at the start of the \textsf{for} loop, we already have $R(p,k)=0$ for all $k<j$.)
When working with column-sparse matrices, the removal of the row $p$ in line 7 of $\MinimizePres$ is implicit; we simply set $R(p,j)=0$.  When the reduction is complete, we reindex all of the entries of the matrix to account for the rows that have been removed.  

We leave to the reader the straightforward proof that \cref{alg:Minimize_Pres} correctly computes a minimal presentation.

$\MinimizePres$ differs from the other algorithms we have considered thus far, in that it requires us to access non-pivot entries of a column (namely, the entries $R(p,k)$, in line 6).  When implemented using a dense matrix, this access is constant time.   
 However, in practice, we want to avoid using dense matrices, and instead work with column-sparse matrices.    
RIVET currently addresses this by representing each column in the input to $\MinimizePres$ as a dynamically allocated array; a binary search then allows us to to access a non-pivot element in time logarithmic in the number of entries of the column.  With this implementation, the cost of $\MinimizePres$ is \[O(\nc^2\log \nr+ \nr\nc\cdot \min(\nr,\nc))=O(\nc^2\nr).\]  

$\MinimizePres$ is embarrassingly parallel.  Specifically, the \textsf{for} loop of lines 5--6, which dominates the cost of the algorithm, can be parallelized.  Thanks to work of Bryn Keller and Dave Turner, our implementation of $\MinimizePres$ in RIVET implements this parallel computation, and we have found that this leads to significant speedups.  For example, on one fairly large example of interest, the parallel implementation led to a factor of 12 speedup of $\MinimizePres$ on a machine with 16 cores.

\begin{remark}\label{Rem:Kerber}
Michael Kerber has pointed out to us that an alternate approach, described in \cite{fugacci2018chunk}, allows us to minimize the presentation in the column-sparse setting without using binary search.  In brief, similar to the standard reduction algorithm for computing persistent homology, one can carry out the minimization in such a way that all of the operations performed on column $j$ are done at once.   This minimization strategy is used to minimize presentations of bipersistence modules in \cite{kerber2021fast}.
\end{remark}

\subsection{Computing the Betti Numbers from a Semi-Minimal Presentation}\label{Sec:Betti_from_Pres}
The row and column labels of a minimal presentation of $M$ encode $\beta_0^M$ and  $\beta_1^M$, respectively.  Given these and $\dimf(M)$, \cref{Prop:Relation_Between_Betti_And_Dim} yields $\beta_2^M$.  
However, in cases where a minimal presentation is not needed, it is more efficient to compute $\beta_0^M$ and  $\beta_1^M$ without fully minimizing the presentation.  This is in keeping with the general principle that computing Betti numbers from a resolution is easier than minimizing the resolution; see, e.g., the timing results in \cite[Section 6]{erocal2016refined}.  Here, we give a simple algorithm which takes as input a semi-minimal presentation $P$ for $M$ and outputs $\beta_0^M$ and  $\beta_1^M$, without minimizing the presentation.  This is a variant of a standard approach in commutative algebra for computing Betti numbers from a non-minimal resolution.
(The algorithm is not yet implemented in RIVET.) 

 Let $\partial:F\to F'$ be a semi-minimal presentation of a module $M$, and let $[\partial]$ be the matrix representation of $\partial$ with respect to bases $B$ and $B'$ for $F$ and $F'$.  For $\z\in \Z^2$, let $D^\z$ denote the submatrix of $[\partial]$ formed by the columns and rows with label $\z$, and let $m_\z$ and $n_\z$ be the number of rows and columns of $D^\z$, respectively.  
 
Our approach hinges on the following result:
\begin{proposition}\label{Prop:Betti_From_Min_Pres}
For all $\z\in \Z^2$, we have
\begin{align*}
\beta^M_0(\z)&=m_\z-\rank D^\z,\\
\beta^M_1(\z)&=n_\z-\rank D^\z=
\nullity D^\z.
\end{align*}
\end{proposition}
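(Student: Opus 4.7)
The plan is to leverage the characterization of semi-minimality: up to natural isomorphism, one has $\partial = \partial_{\min}\oplus\Id_G$, where $\partial_{\min}\colon F^1_{\min}\to F^0_{\min}$ is a minimal presentation of $M$ and $G$ is a free bipersistence module. Every quantity on either side of the claimed equalities is invariant under natural isomorphism of $\partial$: the row and column counts $m_\z, n_\z$ count basis elements at grade $\z$, and (as I will argue next) $\rank D^\z$ depends only on the natural isomorphism class. So I may assume this decomposition holds on the nose. Writing $\beta_0^G(\z)$ for the number of basis elements of $G$ at grade $\z$, this already yields
\[m_\z = \beta_0^M(\z)+\beta_0^G(\z),\qquad n_\z=\beta_1^M(\z)+\beta_0^G(\z).\]

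The key step is to identify $D^\z$ with a matrix representation (in the induced bases) of the graded component $\bar\partial_\z\colon (F/IF)_\z\to (F'/IF')_\z$ of the map on quotients induced by $\partial$. Fix a column $j$ of $[\partial]$ with label $\z$, corresponding to a basis element $B_j\in F_\z$. The decomposition
\[F'_\z \;=\; \bigoplus_{i\,:\,\sgr{B'_i}\leq\z} \ind^{\z-\sgr{B'_i}}\K\cdot B'_i\]
lets me read off $[\partial(B_j)]^{B'}$: the entries with row label strictly below $\z$ lie in $IF'$ and vanish in the quotient, while the entries with row label exactly $\z$ are the coordinates of the image of $B_j$ in $(F'/IF')_\z$ under the induced basis of that quotient. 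Thus $D^\z$ is precisely a matrix of $\bar\partial_\z$, and $\rank D^\z = \rank\bar\partial_\z$.

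With this identification in hand, I apply the split form $\partial=\partial_{\min}\oplus\Id_G$, which induces $\bar\partial=\bar\partial_{\min}\oplus\bar\Id_G$. Minimality of $\partial_{\min}$ is exactly the condition $\im\partial_{\min}\subset IF^0_{\min}$, which says $\bar\partial_{\min}=0$. The identity summand yields $\Id_{G/IG}$, whose $\z$-graded piece is the identity on a $\beta_0^G(\z)$-dimensional space. Hence $\rank D^\z = \beta_0^G(\z)$. Subtracting from $m_\z$ and $n_\z$ gives the two formulas for $\beta_0^M(\z)$ and $\beta_1^M(\z)$, and the final equality $\nullity D^\z = n_\z-\rank D^\z$ is just rank-nullity applied to the linear map represented by $D^\z$.

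The main obstacle, and really the only non-routine step, is justifying the identification of $D^\z$ with a representation of $\bar\partial_\z$. The subtlety is that $D^\z$ is just a raw submatrix of $[\partial]$, and one might worry about contributions from entries with row labels other than $\z$, or from the monomial action on $F'$. The monomial decomposition of $F'_\z$ displayed above resolves this cleanly: everything off the grade-$\z$ rows is absorbed into $IF'$, so restricting to rows labelled $\z$ is exactly projection to $(F'/IF')_\z$.
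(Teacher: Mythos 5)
Your proof is correct and follows essentially the same route as the paper: decompose $\partial\cong\delta\oplus\Id_G$, identify $D^\z$ with a matrix of the induced map on $(F/IF)_\z\to(F'/IF')_\z$, and use minimality of $\delta$ to conclude $\rank D^\z=\beta_0^G(\z)$. You spell out in more detail than the paper does why $D^\z$ represents $\bar\partial_\z$ (the paper asserts this without comment), but the argument and its structure are the same.
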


\begin{proof}
Since $\partial$ is semi-minimal, $\partial\cong \delta\oplus \Id_G$, where $\delta$ is a minimal presentation for $M$ and $\Id_G:G\to G$ is the identity map of a finitely generated free bipersistence module $G$.  It suffices to show that $\rank D^\z=\xi_0^G(\z)$. 
For $N$ a bipersistence module, let $\bar N$ denote the vector space $(N/IN)_\z$, where $IN$ is as defined in \cref{Sec:BettiNumDef}.  Note that $\partial$ induces a map $\bar \partial:\bar F\to \bar F'$.  It is immediate from the definition of a minimal presentation that $\rank \bar \partial=\xi_0^G(\z)$.  Further, note that $B$ and $B'$  induce bases for $\bar F$ and $\bar F'$: Simply take the image under the quotient map of the basis elements with grade $\z$.  Moreover, $\bar \partial$ is represented with respect to these bases by the matrix $D^\z$.  Therefore $\rank D^\z=\xi_0^G(\z)$, as desired. 
\end{proof}

In view of \cref{Prop:Betti_From_Min_Pres},  to compute $\beta^M_0$ and $\beta^M_1$ from $[\partial]$, essentially all we need to do is compute the rank of each $D^\z$.  If the order of row labels of $[\partial]$ is compatible with the partial order on $\Z^2$, and columns with the same label are ordered consecutively, then this is particularly easy to do in the column-sparse framework.   Moreover, different values of $\z$ can be handled in parallel.  We leave the straightforward details to the reader.  If $[\partial]$ is an $\nr\times \nc$ matrix, then computing $\xi_0$ and $\xi_1$ from $[\partial]$ using this algorithm requires $O(\nr\nc\cdot \min(\nr,\nc))$ time and $O(\nr\nc)$ memory, assuming the rows and columns of $[\partial]$ have been ordered appropriately.

\subsection{Computational Complexity of Computing Betti Numbers from an \firep}\label{Sec:Betti_from_FI_Rep}
Using the complexity bounds given above, we now bound the complexity of our approach to computing the bigraded Betti numbers of a finitely generated bipersistence module $M$, given an \firep 
$([\ma],[\mb])$ for $M$: If $[\mb]$ and $[\ma]$ have dimensions $a\times b$ and $b\times c$, respectively, then computing $\beta^M_0$ and $\beta^M_1$ using the approach of \cref{Sec:Betti_from_Pres} takes 
\[O(b(a+b)\min(a,b)+bc\min(b,c)+c^2)\] 
time and $O(b(a+b+c))$ memory; indeed, the cost is dominated by the cost of computing a semi-minimal presentation for $M$, using the algorithm of \cref{Sec:Semi-Minimal_Pres}.  If we wish to also compute $\dimf(M)$ along the way, as described in \cref{Rmk:Hilbert_Function}, then this requires an additional $O((b+c)^2)$ memory to store $\dimf(M)$.  Computing $\beta^M_2$ from $\beta^M_0$, $\beta^M_1$, and $\dimf(M)$ requires an additional $O(c^2)$ elementary operations and $O(c)$ memory.  Thus, computing the bigraded Betti numbers from an \firep via our approach requires 
\[O(b(a+b)\min(a,b)+bc\min(b,c)+c^2)=O((a+b+c)^3)\] 
time and 
\[O(b(a+b+c)+c^2)=O((a+b+c)^2)\]
memory.

\section{Experiments}\label{Sec:Experiments}
We report on computational experiments with implementations of algorithms for computing Betti numbers in RIVET, Macaulay2, and Singular.  

\paragraph{Description of the Test Data}
We perform tests on {\firep}s (i.e., short chain complexes) arising from 12 different data sets.  Of these, six are synthetic point clouds, of sizes ranging between 50 and 800 points.  Each point cloud is a noisy sample of an annulus in $\R^2$. 

Specifically, to construct a point cloud $X$ of $n$ points, we take an i.i.d.\ sample of $0.9n$ points, given in polar coordinates as follows: The radius is sampled from a Gaussian distribution with mean $2$ and standard deviation $0.3$, and the angle is chosen uniformly on the circle.
We then sample an additional $0.1n$ points from a uniform distribution on the rectangle $[-6,6]\times[-6,6]$.  We endow $X$ with a density function $f:X\to \R$ by taking $f(x)$ to be the number of points in $X$ within distance $1$ of $x$.
From this data, we construct a \emph{density-Rips bifiltration}, as described in \cite{carlsson2009theory}, using the Euclidean distance between points.  

The other six data sets we consider are finite metric spaces (i.e., distance matrices) from a publicly available collection of data sets assembled by the authors of the paper ``A Roadmap for the Computation of Persistent Homology'' for benchmarking the performance of 1-parameter persistent homology software  \cite{otter2017roadmap,otter2017github}.  This collection, which we will call the \emph{Roadmap benchmark}, contains 23 finite metric spaces; for our experiments we considered just the first six, in alphabetical order of the file names.  Following the notation of \cite{otter2017roadmap}, where these data sets are described in detail, we denote the data sets as \textsf{eleg}, \textsf{drag\ 1}, \textsf{drag\ 2},  \textsf{frac\ l}, \textsf{frac\ r}, and \textsf{frac\ w}.    

For each finite metric space, we construct a density-Rips bifiltration as we did for the synthetic point cloud data.  As indicated above, the construction of the bifiltration depends on a choice of density function, which in turn depends on a distance parameter $r$, taken to be 1 above.  We use a different value of $r$ for each data set in the Roadmap benchmark.  Specifically, we take $r$ to be the $20^{\mathrm{th}}$ percentile of all non-zero distances between pairs of points.  

For each of the 12 bifiltrations thus constructed, we use RIVET to compute the {\firep}s of the degree 0 and 1 homology modules, taking the field $\K$ to be $\Z/2\Z$.\footnote{Experiments not reported here indicate that the choice of (reasonably small) prime field has only a minor impact on the speed of computations in RIVET, provided we follow standard practice and precompute a table of logarithms in order to expedite the field arithmetic.}

\paragraph{Summary of Tests}
For each \firep, we ran Betti number computations in RIVET (version 1.0), Macaualy2 (version 1.12), and Singular (version 4.1.1).  A Linux machine with 8 cores and 64 GB of RAM was used.  All computations were performed using a single core, except for RIVET's minimization of a semi-minimal presentation, which runs in parallel.  

We tested two routines to compute Betti numbers in Macaulay2.  The first computes a minimal resolution via the algorithm of LaScala and Stillman, which immediately yields the Betti numbers.  This routine is run on an \firep via the following sequence of Macaulay2 commands
\begin{itemize}
\item[] \textsf{homology $\to$ minimalPresentation $\to$ resolution $\to$ betti}.
\end{itemize}

The second routine uses an algorithm recently added to Macaulay2 for fast computation of non-minimal resolutions and Betti numbers.  This is run via the commands
\begin{itemize}
\item[] \textsf{homology $\to$ minimalBetti.}
\end{itemize}

In the first step of both of the above routines, we call the Macaulay2 function \textsf{coker} instead of the function  \textsf{homology} when the input represents a degree-0 homology module.  (In this case the \firep consists of a single non-zero matrix, so a call to \textsf{coker} is sufficient.)

We also tested two different routines to compute Betti numbers in Singular.  The first uses the Singular function \textsf{res} to compute a resolution and then computes Betti numbers from this.  
This is called on an \firep via the Singular commands
\begin{itemize}
\item[] \textsf{homology $\to$ res $\to$ betti.}
\end{itemize}
The second routine uses the recent refinement of Schreyer's algorithm \cite{erocal2016refined} to compute a free resolution.  This is run via the Singular commands
\begin{itemize}
\item[] \textsf{homology $\to$ groebner $\to$ fres $\to$ betti.}
\end{itemize}
In both of the above routines, it is not necessary to call \textsf{homology} when the input represents a degree-0 homology module.

Additional details about these computations are given below.

\paragraph{Methodological Details}
Differences between the software packages led to some issues in comparing the performance of RIVET, Macaualy2, and Singular.  Here we describe these issues and how we addressed them.  
First, we need a definition:
  
\begin{definition}[$\Z$-Grading]\label{Def:Z_Grading}
A \emph{$\Z$-graded} $k[x,y]$-module is a $k[x,y]$-module $N$ with a $k$-vector space decomposition $N\cong \oplus_{z\in \Z} N_z$ such that for each monomial $m$ in $k[x,y]$ of total degree $d$, $m(N_z)\subseteq N_{z+d}$ for all $z\in \Z$.  
\end{definition}

Any bigraded $k[x,y]$-module $M$ determines a $\Z$-graded $k[x,y]$-module $\bar M$, by taking $\bar M_z=\oplus_{a+b=z} M_{a,b}$.  The \emph{$\Z$-graded Betti numbers} of $\bar M$ are the sums along diagonals of the bigraded Betti numbers of $M$.  Thus, any algorithm for computing bigraded Betti numbers also yields $\Z$-graded Betti numbers.

While both Macaulay2 and Singular offer some functionality for computing bigraded Betti numbers, the functions \textsf{minimalBetti} in Macaulay2 
 and \textsf{fres} in Singular currently only work in the $\Z$-graded setting.  Moreover, Singular's \textsf{homology} function does not handle bigraded input.\footnote{In fact, even in the $\Z$-graded case there is a bug in Singular's \textsf{homology} function that causes all of the grades of generators in the computed presentation to be set to 0.  Because we are only interested in performance and not in the output, we choose to nevertheless report timing results for these incorrect computations, as we expect that the computational cost of the correct computations would be very similar.}  
 
 Thus, we do not report on any bigraded homology computations in Singular, but only on $\Z$-graded computations, and our computations using \textsf{minimalBetti} in Macaulay2 are also done only in the $\Z$-graded setting.  
 
 For the algorithm of La Scala and Stillman, we report runtimes for both $\Z$-graded and bigraded versions of the computations.  The timing results we obtain are very similar for both versions.  We expect that for each of  the other 
 $\Z$-graded computations we do, the computational cost of the analogous bigraded computation would be similar.

\begin{table}[ht!]{\footnotesize
  \caption{Runtimes in seconds for our Betti experiments in RIVET, Macaulay2 ($\Z$-graded, bigraded, and \textsf{minimalBetti} tests), and Singular (using \textsf{res} and \textsf{fres}).  For each data set, the results for both $0^{\mathrm{th}}$ and $1^{\mathrm{st}}$ homology are given, in the upper and lower rows, respectively.  A dash indicates that the computation caused a crash and returned no result.  An asterisk indicates that the computation ran out of main memory, began using swap memory, and was stopped before completing.}
  \label{runtimes}

  \begin{center}\begin{tabular}{c|c|c|ccc|cc}
    \toprule
     &  &  & \multicolumn{3}{c|}{Macaulay2} & \multicolumn{2}{c}{Singular} \\
    Data  & $\#$ of Points & RIVET & $\Z$-graded & Bigraded & \textsf{minimalBetti} & \textsf{res} & \textsf{fres} \\
    \midrule
        \multirow{12}{*}{\textsf{point cloud annuli}}    
            &\multirow{2}{*}{50} & 0.001 & 0.059 & 0.073 & 0.020 & 0.010 & 0.010 \\
            &                    & 0.049 & 7.278 & 7.175 & 0.993 & 11.45 & 11.48 \\
            \cmidrule(l){2-8}
            &\multirow{2}{*}{100} & 0.009 & 0.476 & 0.448 & 0.176 & 0.135 & 0.155 \\
            &                     & 0.459 &  ---  &  ---  &  ---  & 1219  & 1223  \\
            \cmidrule(l){2-8}
            &\multirow{2}{*}{200} & 0.058 & 4.457 & 4.464 & 1.348 & 1.765 & 1.890 \\			
            &                     & 8.346 &  ---  &  ---  &  ---  &  ---  &  ---  \\
            \cmidrule(l){2-8}
            &\multirow{2}{*}{400} & 0.300 & 46.56 & 46.22 & 11.64 &  ---  &  ---  \\
            &                     & 137.3 &  ---  &  ---  &  ---  &  ---  &  ---  \\
            \cmidrule(l){2-8}
            &\multirow{2}{*}{600} & 0.822 &  ---  &  ---  &  ---  &  ---  &  ---  \\
            &                     & 640.5 &  ---  &  ---  &  ---  &  ---  &  ---  \\
            \cmidrule(l){2-8}
            &\multirow{2}{*}{800} & 1.549 &  ---  &  ---  &  ---  &  ---  &  ---  \\
            &                     &   *   &  ---  &  ---  &  ---  &  ---  &  ---  \\	
    \midrule
        \multirow{2}{*}{\textsf{eleg}} & \multirow{2}{*}{297} & 0.070 & 7.063 & 6.846 & 1.615 & 6.000 & 6.795 \\
                                       &                      & 4.283 &  ---  &  ---  &  ---  &  ---  &  ---  \\
        \cmidrule(l){1-8}
        \multirow{2}{*}{\textsf{drag\ 1}} & \multirow{2}{*}{1000} & 1.103 &  ---  &  ---  &  ---  & 1473  & 1572  \\
                                          &                       &   *   &  ---  &  ---  &  ---  &  ---  &  ---  \\
        \cmidrule(l){1-8}
        \multirow{2}{*}{\textsf{drag\ 2}} & \multirow{2}{*}{2000} & 5.570 &  ---  &  ---  &  ---  & 24905 & 26234 \\
                                          &                       &   *   &  ---  &  ---  &  ---  &  ---  &  ---  \\
        \cmidrule(l){1-8}
        \multirow{2}{*}{\textsf{frac\ l}} & \multirow{2}{*}{512} & 1.169 & 176.4 & 174.0 & 31.18 & 20.50 &  ---  \\
                                          &                      & 273.6 &  ---  &  ---  &  ---  &  ---  &  ---  \\
        \cmidrule(l){1-8}
        \multirow{2}{*}{\textsf{frac\ r}} & \multirow{2}{*}{512} & 0.382 & 62.71 & 60.50 & 8.133 & 54.46 & 60.26 \\
                                          &                      & 168.0 &  ---  &  ---  &  ---  &  ---  &  ---  \\
        \cmidrule(l){1-8}
        \multirow{2}{*}{\textsf{frac\ w}} & \multirow{2}{*}{512} & 0.020 & 3.707 & 3.923 & 0.406 & 43.77 & 49.42 \\
                                          &                      & 76.59 &  ---  &  ---  &  ---  &  ---  &  ---  \\      
    \bottomrule
  \end{tabular}\end{center}
  }
\end{table}

\paragraph{Runtime Results}
\Cref{runtimes,Table:MinPresTimings} display the results of our experiments.  Note that while the number $n$ of points in each data set is given in the table, the size of the \firep is much larger than $n$.
Namely, the non-zero matrix in the \firep for $H_0$ has dimensions $n \times \binom{n}{2}$, with two non-zero entries per column, and the matrices in the \firep for $H_1$ have dimensions $n \times \binom{n}{2}$ and $\binom{n}{2} \times \binom{n}{3}$, with two and three non-zero entries per column, respectively.
The timing results do not include the time to compute the \firep from the point cloud data (which was done in RIVET, in all cases), but this step does not contribute significantly to the total cost of the computations.

\begin{table}[ht]{\footnotesize
  \caption{Time in seconds to compute a semi-minimal presentation and to minimize this.  Minimization was performed in parallel using an 8-core machine.}
  \label{Table:MinPresTimings}
  \begin{center}\begin{tabular}{c|ccc}
    \toprule
    Data  & $\#$ of Points & Semi-Minimal Presentation & Minimization \\
    \midrule
    \multirow{12}{*}{\textsf{point cloud annuli}}    
                                     &\multirow{2}{*}{50}       & 0.001 &0.000 \\
    					&	          	        &  0.028 &0.009 \\
							         \cmidrule(l){2-4}
				   &\multirow{2}{*}{100}	& 0.008 & 0.001 \\
				   	&				& 0.431 & 0.028 \\
									 \cmidrule(l){2-4}
				    &\multirow{2}{*}{200}    & 0.045 &0.005 \\			
               				&				& 7.969& 0.238  \\
								 \cmidrule(l){2-4}
               		           &\multirow{2}{*}{400}      & 0.231&0.029 \\
               				&				& 138.3 & 5.858  \\
								 \cmidrule(l){2-4}
				  &\multirow{2}{*}{600}       & 0.686 &0.048  \\
				     &                                   &  502.8&38.89 \\
				                                           \cmidrule(l){2-4}
				   &\multirow{2}{*}{800} 	& 1.329&0.083 \\
					&				& * & * \\	
					        \midrule
        \multirow{2}{*}{\textsf{eleg}}             & \multirow{2}{*}{297}                  &0.056	&0.005\\
                                                                  &                                                 &3.293	 &0.979  \\
            \cmidrule(l){1-4}
        \multirow{2}{*}{\textsf{drag\ 1}}           & \multirow{2}{*}{1000}             & 0.973	&0.042  \\
                                                                  &         				       & * & * \\
           \cmidrule(l){1-4}
       \multirow{2}{*}{\textsf{drag\ 2}}          & \multirow{2}{*}{2000}              & 5.271	&0.103 \\
                                                                   &         				      & * & * \\
            \cmidrule(l){1-4}
        \multirow{2}{*}{\textsf{frac\ l}}              & \multirow{2}{*}{512}               & 1.169	&0.121 \\
                                                                    &            				        & 254.1	&19.33 \\
         \cmidrule(l){1-4}
         \multirow{2}{*}{\textsf{frac\ r}}           & \multirow{2}{*}{512}                 & 0.307	 &0.026 \\
                    						&                                                & 151.4 & 16.53 \\
             \cmidrule(l){1-4}
          \multirow{2}{*}{\textsf{frac\ w}}          & \multirow{2}{*}{512}                &0.020	&0.000 \\
              							&	  					&68.09	&8.498  \\      
    \bottomrule
  \end{tabular}\end{center}
  }
\end{table}

In all of our experiments, the RIVET computations were much faster than the corresponding computations in Macaulay2 and Singular, and neither Macaulay2 nor Singular was able to handle our larger problem instances.   
To the best of our understanding, Macaulay2 does not currently handle sparse matrices in the context of Betti number computation; in view of this, it is not surprising that Macaulay2 does not handle our larger problems.   Singular does work with sparse matrices, but nevertheless does not handle our larger problems.  

It turns out that for all our Singular computations in homology degree 1, the cost of calling the \textsf{homology} function dominates the total cost of the computation.  This explains why, for homology degree 1, there is not much difference between the timings for the $\textsf{res}$ and $\textsf{fres}$ computations in Singular.  
\cref{Table:MinPresTimings} shows that the cost of computing the minimal presentation (using parallelization) was usually much less than the cost of computing the semi-minimal presentation.

\section{Discussion}\label{Sec:Discussion}
To the best of our knowledge, our work is the first to give algorithms for the computation of minimal presentations and bigraded Betti numbers which have cubic runtime and quadric memory requirements, with respect to the sum of the dimensions of the input matrices.  
Our experiments indicate that our algorithms perform well enough in practice to be used on many of the same kinds of data sets to which one usually applies 1-parameter persistent homology.  Still, there is still much room for improvement in the implementation of our algorithms.   As mentioned in the introduction, some such improvements have been introduced in the recent work of Kerber and Rolle \cite{kerber2021fast}.    
We expect that for certain types of bifiltrations, such as the Vietoris--Rips bifiltrations considered in \cite{carlsson2009theory} and \cite{lesnick2015interactive}, additional optimizations along the lines of those used in Ripser \cite{bauer2017ripser} may also be useful for computing presentations of larger data sets.    

As indicated in the introduction, to work with a homology module of a multi-critical bifiltration, our current approach is to first construct a chain complex of free modules, following Chacholski et al. \cite{chacholski2017combinatorial}.  However, for some multi-critical bifiltrations of particular interest to us, such as the \emph{degree-Rips} bifiltrations introduced in \cite{lesnick2015interactive} and studied in \cite{blumberg2020stability}, this leads to large chain complexes with many copies of the same column.  We imagine that there might be a way to extend our algorithms to work directly with the chain complex of a multi-critical bifiltration, and that this may be more efficient.

Finally, we would like to understand whether specialized algorithms for 3-parameter persistence modules can improve on the performance of known algorithms which work in greater generality.

\subsection*{Acknowledgements}
Many thanks to: William Wang, for valuable discussions and for writing the scripts used to obtain the timing data presented in this paper; Dave Turner, for parallelizing RIVET's code for minimizing a presentation; Bryn Keller, for extensive contributions to RIVET, including ones which made the parallelization possible, and also for pointing out several typos in an early version of this paper; Michael Kerber, for sharing the observation of \cref{Rem:Kerber}; Alex Tchernev, for sharing his insights about the size of Gr\"obner bases in the 2-parameter setting, as described in the proof of \cref{Prop:Tchernev}; Ulrich Bauer, for many enlightening discussions about 1-parameter persistence computation, which influenced this work; Roy Zhao, for helpful conversations about the clearing optimization and persistence computation more generally; and the anonymous reviewers, for several suggestions which led to significant improvements of this paper. Work on the paper began while the authors were postdoctoral fellows at the Institute for Mathematics and its Applications; we thank the IMA and its staff for their support.  This work was funded in part by NSF grant DMS-1606967.  In addition, Lesnick was supported in part by NIH grant T32MH065214 and an award from the J. Insley Blair Pyne Fund.


\bibliographystyle{abbrv}
{\small \bibliography{Bigraded_Betti_References} }

\begin{thebibliography}{10}

\bibitem{CoCoA}
J.~Abbott, A.~M. Bigatti, and L.~Robbiano.
\newblock {CoCoA}: a system for doing {C}omputations in {C}ommutative
  {A}lgebra.
\newblock Available at \url{http://cocoa.dima.unige.it}.

\bibitem{allili2017reducing}
M.~Allili, T.~Kaczynski, and C.~Landi.
\newblock Reducing complexes in multidimensional persistent homology theory.
\newblock {\em Journal of Symbolic Computation}, 78:61--75, 2017.

\bibitem{bauer2017ripser}
U.~Bauer.
\newblock Ripser: a lean c++ code for the computation of vietoris-rips
  persistence barcodes.
\newblock \url{https://github.com/Ripser/ripser}, 2016-2018.

\bibitem{bauer2019ripser}
U.~Bauer.
\newblock Ripser: efficient computation of vietoris-rips persistence barcodes.
\newblock {\em Journal of Applied and Computational Topology}, 5(3):391--423,
  2021.

\bibitem{bauer2014distributed}
U.~Bauer, M.~Kerber, and J.~Reininghaus.
\newblock Distributed computation of persistent homology.
\newblock In {\em 2014 proceedings of the sixteenth workshop on algorithm
  engineering and experiments (ALENEX)}, pages 31--38. SIAM, 2014.

\bibitem{bauer2017phat}
U.~Bauer, M.~Kerber, J.~Reininghaus, and H.~Wagner.
\newblock Phat--persistent homology algorithms toolbox.
\newblock {\em Journal of symbolic computation}, 78:76--90, 2017.

\bibitem{bayer1992can}
D.~Bayer and D.~Mumford.
\newblock What can be computed in algebraic geometry?
\newblock In {\em Computational Algebraic Geometry and Commutative Algebra}.
  Citeseer, 1992.

\bibitem{biasotti2011new}
S.~Biasotti, A.~Cerri, P.~Frosini, and D.~Giorgi.
\newblock A new algorithm for computing the 2-dimensional matching distance
  between size functions.
\newblock {\em Pattern Recognition Letters}, 32(14):1735--1746, 2011.

\bibitem{bjerkevik2017computational}
H.~B. Bjerkevik and M.~B. Botnan.
\newblock {Computational Complexity of the Interleaving Distance}.
\newblock In {\em 34th International Symposium on Computational Geometry
  (SoCG)}, 2018.

\bibitem{bjerkevik2018computing}
H.~B. Bjerkevik, M.~B. Botnan, and M.~Kerber.
\newblock Computing the interleaving distance is {NP}-hard.
\newblock {\em Foundations of Computational Mathematics}, 20(5):1237--1271,
  2020.

\bibitem{bjerkevik2021asymptotic}
H.~B. Bjerkevik and M.~Kerber.
\newblock Asymptotic improvements on the exact matching distance for
  2-parameter persistence.
\newblock {\em arXiv preprint arXiv:2111.10303}, 2021.

\bibitem{blumberg2020stability}
A.~J. Blumberg and M.~Lesnick.
\newblock Stability of 2-parameter persistent homology.
\newblock {\em arXiv preprint arXiv:2010.09628}, 2020.

\bibitem{boissonnat2018geometric}
J.-D. Boissonnat, F.~Chazal, and M.~Yvinec.
\newblock {\em Geometric and Topological Inference}, volume~57.
\newblock Cambridge University Press, 2018.

\bibitem{MR1484478}
W.~Bosma, J.~Cannon, and C.~Playoust.
\newblock The {M}agma algebra system. {I}. {T}he user language.
\newblock {\em J. Symbolic Comput.}, 24(3-4):235--265, 1997.
\newblock Computational algebra and number theory (London, 1993).

\bibitem{botnan2018algebraic}
M.~Botnan and M.~Lesnick.
\newblock Algebraic stability of zigzag persistence modules.
\newblock {\em Algebraic \& Geometric Topology}, 18(6):3133--3204, 2018.

\bibitem{carlsson2009topology}
G.~Carlsson.
\newblock Topology and data.
\newblock {\em American Mathematical Society}, 46(2):255--308, 2009.

\bibitem{carlsson2010computing}
G.~Carlsson, G.~Singh, and A.~J. Zomorodian.
\newblock Computing multidimensional persistence.
\newblock {\em Journal of Computational Geometry}, 1(1):72--100, 2010.

\bibitem{carlsson2009theory}
G.~Carlsson and A.~Zomorodian.
\newblock {The theory of multidimensional persistence}.
\newblock {\em Discrete and Computational Geometry}, 42(1):71--93, 2009.

\bibitem{cerri2013betti}
A.~Cerri, B.~Di~Fabio, M.~Ferri, P.~Frosini, and C.~Landi.
\newblock Betti numbers in multidimensional persistent homology are stable
  functions.
\newblock {\em Mathematical Methods in the Applied Sciences},
  36(12):1543--1557, 2013.

\bibitem{chacholski2017combinatorial}
W.~Chacholski, M.~Scolamiero, and F.~Vaccarino.
\newblock Combinatorial presentation of multidimensional persistent homology.
\newblock {\em Journal of Pure and Applied Algebra}, 221(5):1055--1075, 2017.

\bibitem{chen2011persistent}
C.~Chen and M.~Kerber.
\newblock Persistent homology computation with a twist.
\newblock In {\em Proceedings 27th European Workshop on Computational
  Geometry}, volume~11, pages 197--200, 2011.

\bibitem{cochoy2016decomposition}
J.~Cochoy and S.~Oudot.
\newblock Decomposition of exact pfd persistence bimodules.
\newblock {\em Discrete and Computational Geometry}, 63(2):255--293, 2020.

\bibitem{corbet2021computing}
R.~Corbet, M.~Kerber, M.~Lesnick, and G.~Osang.
\newblock Computing the multicover bifiltration.
\newblock In K.~Buchin and E.~Colin~de Verdi\`{e}re, editors, {\em 37th
  International Symposium on Computational Geometry (SoCG 2021)}, volume 189,
  pages 27:1--27:17, Dagstuhl, Germany, 2021. Schloss Dagstuhl --
  Leibniz-Zentrum f{\"u}r Informatik.

\bibitem{cox1998using}
D.~Cox, J.~Little, and D.~O'Shea.
\newblock {\em Using algebraic geometry}, volume 185.
\newblock Springer Verlag, 1998.

\bibitem{de2011dualities}
V.~De~Silva, D.~Morozov, and M.~Vejdemo-Johansson.
\newblock Dualities in persistent (co) homology.
\newblock {\em Inverse Problems}, 27(12):124003, 2011.

\bibitem{dey2018computing}
T.~K. Dey and C.~Xin.
\newblock {Computing Bottleneck Distance for 2-D Interval Decomposable
  Modules}.
\newblock In {\em 34th International Symposium on Computational Geometry
  (SoCG)}, 2018.

\bibitem{edelsbrunner2010computational}
H.~Edelsbrunner and J.~Harer.
\newblock {\em Computational topology: an introduction}.
\newblock American Mathematical Society, 2010.

\bibitem{edelsbrunner2018multi}
H.~Edelsbrunner and G.~Osang.
\newblock The multi-cover persistence of euclidean balls.
\newblock In {\em 34th International Symposium on Computational Geometry
  (SoCG)}, 2018.

\bibitem{edelsbrunner2020simple}
H.~Edelsbrunner and G.~Osang.
\newblock A simple algorithm for higher-order delaunay mosaics and alpha
  shapes.
\newblock {\em arXiv preprint arXiv:2011.03617}, 2020.

\bibitem{eisenbud1995commutative}
D.~Eisenbud.
\newblock {\em Commutative Algebra: with a view toward algebraic geometry},
  volume 150.
\newblock Springer Science \& Business Media, 1995.

\bibitem{eisenbud2005geometry}
D.~Eisenbud.
\newblock {\em The geometry of syzygies: a second course in algebraic geometry
  and commutative algebra}, volume 229.
\newblock Springer Science \& Business Media, 2005.

\bibitem{eisenbud2001computations}
D.~Eisenbud, D.~R. Grayson, M.~Stillman, and B.~Sturmfels.
\newblock {\em Computations in algebraic geometry with Macaulay 2}, volume~8.
\newblock Springer Science \& Business Media, 2001.

\bibitem{erocal2016refined}
B.~Er{\"o}cal, O.~Motsak, F.-O. Schreyer, and A.~Steenpa{\ss}.
\newblock Refined algorithms to compute syzygies.
\newblock {\em Journal of Symbolic Computation}, 74:308--327, 2016.

\bibitem{faugere2002new}
J.~Faugere.
\newblock A new efficient algorithm for computing grobner bases without
  reduction to zero (f5).
\newblock {\em Proceedings of the 2002 International Symposium on Symbolic and
  Algebraic Computation}, 2002.

\bibitem{faugere1999new}
J.-C. Faugere.
\newblock A new efficient algorithm for computing gr{\"o}bner bases (f4).
\newblock {\em Journal of pure and applied algebra}, 139(1-3):61--88, 1999.

\bibitem{fugacci2018chunk}
U.~Fugacci and M.~Kerber.
\newblock Chunk reduction for multi-parameter persistent homology.
\newblock In {\em 35th International Symposium on Computational Geometry (SoCG
  2019)}, volume 129, pages 37:1--37:14, 2019.

\bibitem{greuel2012singular}
G.-M. Greuel and G.~Pfister.
\newblock {\em A Singular introduction to commutative algebra}.
\newblock Springer Science \& Business Media, 2012.

\bibitem{harrington2017stratifying}
H.~A. Harrington, N.~Otter, H.~Schenck, and U.~Tillmann.
\newblock Stratifying multiparameter persistent homology.
\newblock {\em SIAM Journal on Applied Algebra and Geometry}, 3(3):439--471,
  2019.

\bibitem{henselman2016matroid}
G.~Henselman and R.~Ghrist.
\newblock Matroid filtrations and computational persistent homology.
\newblock {\em arXiv preprint arXiv:1606.00199}, 2016.

\bibitem{hylton2017performance}
A.~Hylton, G.~Henselman-Petrusek, J.~Sang, and R.~Short.
\newblock Performance enhancement of a computational persistent homology
  package.
\newblock In {\em 2017 IEEE 36th international performance computing and
  communications conference (IPCCC)}, pages 1--8. IEEE, 2017.

\bibitem{keller2018persistent}
B.~Keller, M.~Lesnick, and T.~L. Willke.
\newblock Persistent homology for virtual screening.
\newblock {\em ChemRxiv}, 2018.

\bibitem{kerber2018exact}
M.~Kerber, M.~Lesnick, and S.~Oudot.
\newblock Exact computation of the matching distance on 2-parameter persistence
  modules.
\newblock In {\em 35th International Symposium on Computational Geometry (SoCG
  2019)}, volume 129, pages 46:1--46:15, 2019.

\bibitem{kerber2020efficient}
M.~Kerber and A.~Nigmetov.
\newblock Efficient approximation of the matching distance for 2-parameter
  persistence.
\newblock In {\em 36th International Symposium on Computational Geometry (SoCG
  2020)}. Schloss Dagstuhl-Leibniz-Zentrum f{\"u}r Informatik, 2020.

\bibitem{kerber2021fast}
M.~Kerber and A.~Rolle.
\newblock Fast minimal presentations of bi-graded persistence modules.
\newblock In {\em 2021 Proceedings of the Workshop on Algorithm Engineering and
  Experiments (ALENEX)}, pages 207--220. SIAM, 2021.

\bibitem{kim2020spatiotemporal}
W.~Kim and F.~M{\'e}moli.
\newblock Spatiotemporal persistent homology for dynamic metric spaces.
\newblock {\em Discrete \& Computational Geometry}, pages 1--45, 2020.

\bibitem{la1998strategies}
R.~La~Scala and M.~Stillman.
\newblock Strategies for computing minimal free resolutions.
\newblock {\em Journal of Symbolic Computation}, 26(4):409--431, 1998.

\bibitem{lesnick2015interactive}
M.~Lesnick and M.~Wright.
\newblock Interactive visualization of 2-d persistence modules.
\newblock {\em arXiv preprint arXiv:1512.00180}, 2015.

\bibitem{miller2020homological}
E.~Miller.
\newblock Homological algebra of modules over posets.
\newblock {\em arXiv preprint arXiv:2008.00063}, 2020.

\bibitem{miller2005combinatorial}
E.~Miller and B.~Sturmfels.
\newblock {\em Combinatorial commutative algebra}.
\newblock Springer-Verlag, New York, 2005.

\bibitem{otter2017github}
N.~Otter.
\newblock {PH-Roadmap}, {Github Repository}.
\newblock \url{https://github.com/n-otter/PH-roadmap}.

\bibitem{otter2017roadmap}
N.~Otter, M.~A. Porter, U.~Tillmann, P.~Grindrod, and H.~A. Harrington.
\newblock A roadmap for the computation of persistent homology.
\newblock {\em EPJ Data Science}, 6(1):17, 2017.

\bibitem{oudot2015persistence}
S.~Y. Oudot.
\newblock {\em Persistence theory: from quiver representations to data
  analysis}, volume 209.
\newblock American Mathematical Society Providence, RI, 2015.

\bibitem{peeva2011graded}
I.~Peeva.
\newblock {\em Graded syzygies}.
\newblock Springer-Verlag, 2011.

\bibitem{scaramuccia2018computing}
S.~Scaramuccia, F.~Iuricich, L.~De~Floriani, and C.~Landi.
\newblock Computing multiparameter persistent homology through a discrete
  morse-based approach.
\newblock {\em Computational Geometry}, 89:101623, 2020.

\bibitem{schiff2020characterizing}
Y.~Schiff, P.~Das, V.~Chenthamarakshan, and K.~N. Ramamurthy.
\newblock Characterizing the latent space of molecular deep generative models
  with persistent homology metrics.
\newblock In {\em NeurIPS 2020 Workshop on Topological Data Analysis and
  Beyond}, 2020.

\bibitem{schreyer1980berechnung}
F.-O. Schreyer.
\newblock {\em Die berechnung von syzygien mit dem verallgemeinerten
  weierstra{\ss}schen divisionssatz und eine anwendung auf analytische
  cohen-macaulay stellenalgebren minimaler multiplizit{\"a}t}.
\newblock PhD thesis, 1980.

\bibitem{sheehy2012multicover}
D.~R. Sheehy.
\newblock A multicover nerve for geometric inference.
\newblock In {\em CCCG: Canadian Conference in Computational Geometry}, 2012.

\bibitem{singmaster1974notes}
D.~Singmaster.
\newblock Notes on binomial coefficients {III}{---}any integer divides almost
  all binomial coefficients.
\newblock {\em Journal of the London Mathematical Society}, 2(3):555--560,
  1974.

\bibitem{skryzalin2016numeric}
J.~Skryzalin.
\newblock Numeric invariants from multidimensional persistence.
\newblock {\em Ph.D. Dissertation, Stanford University}, 2016.

\bibitem{rivet}
{The RIVET Developers}.
\newblock {RIVET}: Software for visualization and analysis of 2-parameter
  persistent homology.
\newblock \url{https://github.com/rivetTDA/}, 2016--2020.

\bibitem{vipond2018multiparameter}
O.~Vipond.
\newblock Multiparameter persistence landscapes.
\newblock {\em Journal of Machine Learning Research}, 21(61):1--38, 2020.

\bibitem{vipond2021multiparameter}
O.~Vipond, J.~A. Bull, P.~S. Macklin, U.~Tillmann, C.~W. Pugh, H.~M. Byrne, and
  H.~A. Harrington.
\newblock Multiparameter persistent homology landscapes identify immune cell
  spatial patterns in tumors.
\newblock {\em Proceedings of the National Academy of Sciences}, 118(41), 2021.

\bibitem{wasserman2018topological}
L.~Wasserman.
\newblock Topological data analysis.
\newblock {\em Annual Review of Statistics and Its Application}, 5:501--532,
  2018.

\bibitem{wright2020topological}
M.~Wright and X.~Zheng.
\newblock Topological data analysis on simple english wikipedia articles.
\newblock {\em The PUMP Journal of Undergraduate Research}, 3:308--328, 2020.

\bibitem{zomorodian2005computing}
A.~Zomorodian and G.~Carlsson.
\newblock {Computing persistent homology}.
\newblock {\em Discrete and Computational Geometry}, 33(2):249--274, 2005.

\end{thebibliography}

\end{document}